\newtheorem{theorem}{Theorem}
\numberwithin{theorem}{section}
\theoremstyle{definition}
\newtheorem{proposition}[theorem]{Proposition}
\newtheorem{lemma}[theorem]{Lemma}
\newtheorem{setup}[theorem]{Setup}
\newtheorem{corollary}[theorem]{Corollary}
\newtheorem{definition}[theorem]{Definition}
\newtheorem{remark}[theorem]{Remark}
\newtheorem{example}[theorem]{Example}
\newtheoremstyle{citing}% name
  {}%      Space above, empty = `usual value'
  {}%      Space below
  {\itshape}% Body font
  {}%         Indent amount (empty = no indent, \parindent = para indent)
  {\bfseries}% Thm head font
  {\textbf{.}}%        Punctuation after thm head
  {.5em}%     Space after thm head: " " = normal interword space;
\theoremstyle{citing}
\newtheorem*{custom}{}}
\newcommand{\vth}{\vartheta}
\newcommand{\vol}{{\mathrm{vol}}}
\newcommand{\RR}{\mathbb{R}}
\newcommand{\sB}{{\mathcal B}}
\newcommand{\sF}{{\mathcal F}}
\newcommand{\sK}{{\mathcal K}}
\newcommand{\sP}{{\mathcal P}}
\newcommand{\scrE}{{\mathscr E}}
\newcommand{\C}{{\mathbb C}}
\newcommand{\E}{{\mathbb E}}
\newcommand{\N}{{\mathbb N}}
\renewcommand{\P}{{\mathbb P}}
\newcommand{\R}{{\mathbb R}}
\newcommand{\Z}{{\mathbb Z}}
\newcommand{\abs}[1]{{\left|#1\right|}}
\newcommand{\eps}{\varepsilon}
\newcommand{\id}{{\rm id}}
\newcommand{\into}{{\, \hookrightarrow\,}}
\newcommand{\isom}{{\ \cong\ }}
\newcommand{\ohne}{{\ \setminus \ }}
\newcommand{\pt}{{\operatorname{pt}}}
\newcommand{\supp}{\operatorname{supp}}
\renewcommand{\to}[1][]{\xrightarrow{\ #1\ }}
\newcommand{\veps}{\varepsilon}
\newcommand{\vphi}{\varphi}
\newcommand{\wh}[1]{{\widehat{#1}}}
\newcommand{\klmn}{{k,m,n}}
\newcommand{\BC}{{\sB}} % choose a symbol for the barcode space 
\newcommand{\BCwas}{{\hat\BC_p}} % choose a symbol for the completion of the barcode space wrt wasserstein
\newcommand{\BCbot}{{\hat\BC_{\infty}}} % choose a symbol for the completion of the barcode space wrt bottleneck
\newcommand{\lone}{{\ell_1}}
\renewcommand{\S}{\mathbb{S}}
\renewcommand{\k}{\mathbf{k}}
\newcommand{\norm}[1]{\left\lVert#1\right\rVert}
\newcommand{\PH}{{\mathrm{PH}}}
\newcommand{\one}[1]{{\mathbbm{1}_{#1}}}
\newcommand{\Rplus}{{\R_{\geq 0}}}
\begin{document}
\title{Geometric and Probabilistic Limit Theorems in Topological Data Analysis}
\author{Sara~Kali\v{s}nik, Christian Lehn, and Vlada Limic}
\begin{abstract}
We develop a general framework for the probabilistic analysis of random finite point clouds in the context of 
 topological data analysis. 
We extend the notion of a barcode 
of a finite point cloud to compact metric spaces. Such a barcode lives in the completion 
of the space of barcodes with respect to the bottleneck distance, which is quite natural from an analytic point 
of view. As an application we prove that the barcodes of i.i.d. random variables 
sampled from a compact metric space converge to the barcode of the support of their distribution when the number 
of points goes to infinity. We also examine more quantitative convergence questions for uniform sampling from 
compact manifolds, including expectations of transforms of barcode valued random variables in Banach spaces. 
We believe that the methods developed here will serve as useful tools in studying more sophisticated questions 
in topological data analysis and related fields. 
\end{abstract}

\subjclass[2010]{57N65, 60B12, 60D05 (primary), 55N35, 60B05 (secondary).}
\keywords{topological data analysis, persistent homology, topological manifolds, probabilistic limit theorems, barcodes}

%From 55Nxx 		Homology and cohomology theories:
	%55N35   	Other homology theories
%
%From 57Nxx 		Topological manifolds:
	%57N65   	Algebraic topology of manifolds
%
%From  	60Bxx 		Probability theory on algebraic and topological structures:
 	%60B05   	Probability measures on topological spaces
	%60B12   	Limit theorems for vector-valued random variables (infinite-dimensional case)
%
%From 60Dxx 		Geometric probability and stochastic geometry:
	%60D05   	Geometric probability and stochastic geometry	

\maketitle
\let\languagename\relax
\tableofcontents

%---------------------------------------------------------------------------------------------------------
\section{Introduction}\label{section intro}
%---------------------------------------------------------------------------------------------------------
Topological Data Analysis (TDA) is a fast growing field whose aim is to provide a set of new topological and geometric tools for analyzing data. One of the most widely used tools is persistent homology. The ideas behind persistent homology can be traced back to the works of Patrizio Frosini~\cite{Frosini} on size functions, and of Vanessa Robins~\cite{Robins} on using experimental data to infer the topology of attractors in dynamical systems, though the method only gained prominence with the pioneering works of Edelsbrunner, Letscher and Zomorodian~\cite{elz-tps-02} and Carlsson and Zomorodian~\cite{ZC}. 
Persistent homology has been used to address problems in fields ranging from sensor networks~\cite{VinEvader, adams}, medicine~\cite{Ferri, ADCOCK201436}, neuroscience~\cite{Chung2009, Curto2013, Giusti_Pastalkova_Curto_Itskov_2015}, as well as imaging analysis~\cite{Klein}.

The input of persistent homology is usually a point cloud, i.e. a finite metric space. 
Since finitely many points do not carry any nontrivial topological information, the idea is to consider the homology of
 thickenings of this point cloud in order to deduce information about the data or the distribution it is sampled from.
The output is a barcode, i.e. a multiset of intervals,
where each interval (``bar'') represents a topological feature present at parameter values specified by the interval.
This space of barcodes $\BC$ comes equipped with natural metrics, for example the Wasserstein and the Bottleneck distance.

The present paper grew out of an attempt to understand how some of the fundamental aspects of persistent homology \emph{and probability theory} could interact in order to
 allow for further statistical applications.
Here and in the rest of the introduction we will present some of our key results. 

Firstly, we wish to extend the notion of a barcode from finite sets to compact sets. 
This is done in 

\begin{custom}[Proposition \ref{proposition barcodes for compact metric spaces}] Let $k \in \N_0$ be a nonnegative integer, and let $M$ be a metric space.
 Then for every compact set $K\subset M$ there is a barcode $\beta_k(K) \in \BCbot$ such that $K \mapsto \beta_k(K)$ is a $1$-Lipschitz map from the space of
 compact subsets of $M$, equipped with the Hausdorff distance, to the completion $\BCbot$ of the barcode space $\BC$, with respect to the bottleneck distance. 
\end{custom}

This result can also be obtained from  the main theorem of  \cite{CSEH07}. It was later explicitly stated and proved in~\cite[Proposition 5.1]{Chazal2014} and relied on a measure theoretic approach to persistent homology introduced in~\cite{chazal:hal-01330678}. For completeness, we include a simple, conceptually clear, and self-contained proof. See Remark \ref{remark barcodes for totally bounded spaces} for an extension to totally bounded spaces.

Since we use a limiting procedure to define $\beta_k$ on compact subsets of $M$, the barcode $\beta_k(K)$ has to live in the completion $\BCbot$, which is a natural space 
for doing analysis with barcodes.

\enlargethispage{\baselineskip}

Suppose now that  the point cloud is obtained by sampling independent and identically distributed (i.i.d.)  points from an unknown distribution with compact support $C$.
The following question seems very natural, and it is somewhat surprising that it has not yet been answered:

\begin{center}
	\emph{What happens to the barcode as we sample more and more such points?}
	\end{center} 
	
\noindent	
In Section \ref{section approximation two}, we provide the following intuitive answer. 
\begin{custom}[Theorem \ref{theorem almost sure limit of number of points}] 
Let $M$ be a metric space, $X_1,X_2, \ldots$ be i.i.d.~$M$-valued random variables, and $k\in \N_0$. 
Define $P_n=\{X_1,X_2, \ldots,X_n\}$.
If the distribution of the $X_i$ has support equal to a compact subset $C \subset M$, then
\[
\beta_k(C) = \lim_{n\to \infty} \beta_k(P_n) \ \ \textrm{ almost surely.}
\]
\end{custom}

In the stochastic setting we also address questions about the mean and deviation from (or concentration about) the mean. 
For this discussion we consider random variables taking values in some Banach space. 
Starting from a barcode valued random variable $\beta$ (e.g.~$\beta=\beta_k(P_n)$ as above), one can obtain a Banach space 
valued random variable, as in~\cite{algfn, Bub15, polonik, Kal18, Reininghaus_2015_CVPR} and many others. In this paper we use the functions proposed by~\cite{Kal18} as a primary example, though the results are stated in full generality for Lipschitz continuous maps
\[
 T: \BCbot \to V
\]
from the completed space of barcodes to some  space $V$. Mapping to a Banach space is necessary to be able to talk about stochastic quantities such as expectation. But even more importantly, in order to use the information contained in the barcode using machine learning algorithms, one needs to produce a vector valued output. As mentioned above, there is a plethora of methods to produce such an output and our setup covers all of them, the only hypothesis being that the map $T$ is Lipschitz continuous. Note that it has been shown in \cite{CarriereB19} that one cannot embed the space of barcodes with finitely many functions which is why we stress the (infinite dimensional) Banach setup.

The study of probabilistic properties of $T\circ \beta$ naturally leads to the law of large numbers and a central limit theorem, as Bubenik first observed in~\cite{Bub15}. 
They can be formulated as follows.
\begin{custom}[Theorems \ref{theorem lln for barcodes} and \ref{theorem clt for barcodes}]---

\begin{itemize}
 \item Let $T: \BCbot \to V$ be a continuous map from the (bottleneck completed) space of barcodes to a separable Banach space $V$ and let $\{X_i\}_{i\in \N}$ be an
 i.i.d.~sequence of $\BCbot$-valued random variables such that $\E[\norm{T(X_1)}]< \infty$. Then the sequence $(S_n)_n$ of empirical means
$$
S_n := \frac{T(X_1) + \ldots + T(X_n)}{n}
$$
converges almost surely to $\E[T(X_1)]$. 
\item Suppose that in addition $\E[{T(X_1)}]=0$ and $\E[\norm{T(X_1)}^2]< \infty$, and let $S_n$ be as above. 
If $V$ is of type $2$, then $(\sqrt{n} S_n)_n$ converges weakly to a Gaussian random variable with the covariance structure of $T(X_1)$.
\end{itemize}
\end{custom}

The reader may be concerned about the vacuousness of the just stated result due to its rather abstract setting.
We respond by addressing the important situation of barcodes of compact metric spaces (in particular including that of point clouds sampled from a distribution with compact support).

\begin{custom}[Theorem \ref{theorem hypothesis of lln and clt for compact}]
Let $M$ be a metric space, $K(M)$ the complete metric space of all compact subsets of $M$ with the Hausdorff distance, and $X$ a random variable taking
 values in $K \in K(M)$.
Consider the $k$-th barcode map $\beta_k:K\left(M\right)\to \BCbot$ and let $T:\BCbot \to V$ be a continuous map, where  $V$ is a separable Banach space of type $2$. 
Then $\norm{T(\beta_k(X))}$ has finite $n$-th moments for all $n\geq 0$.
\end{custom}

Once the existence of barcode expectations is settled, it is important to know how to calculate them for random point clouds of bigger and bigger samples, drawn from an unknown distribution.
The TDA pipeline is too complicated for permitting one to find an explicit symbolic way for such calculations in general. 
The only reasonable way of doing so is to make an \emph{educated guess}! 
We infer directly from Theorem \ref{theorem almost sure limit of number of points}

\begin{corollary}
Let $M$ be a metric space, and $X_1,X_2, \ldots$ an i.i.d.~sequence of $M$-valued random variables.
Set $k\in \N_0$, and put $P_n=\{X_1,X_2, \ldots,X_n\}$.
If the distribution of the $X_i$ has support equal to a compact subset $C \subset M$, and if $T:\BCbot \to V$ is a continuous map to a Banach space of type 2, then
\[
T\left(\beta_k(C)\right) = \lim_{n\to \infty} \E[T\left(\beta_k(P_n)\right)].
\]
\end{corollary}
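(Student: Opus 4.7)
The plan is to deduce the corollary directly from Theorem \ref{theorem almost sure limit of number of points} by means of the dominated convergence theorem for Banach-space-valued random variables. Theorem \ref{theorem almost sure limit of number of points} furnishes an event of full probability on which $\beta_k(P_n)\to \beta_k(C)$ in $\BCbot$, and the continuity of $T:\BCbot\to V$ promotes this to $T(\beta_k(P_n))\to T(\beta_k(C))$ almost surely in $V$. Since $T(\beta_k(C))$ is a deterministic element of $V$, proving the claim reduces to justifying the exchange of limit and expectation, for which one needs an almost-sure integrable majorant of $\|T(\beta_k(P_n))\|$.

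To produce such a majorant I would exploit the fact that $C=\supp(X_1)$ forces $X_i\in C$ almost surely for every $i$, and hence $P_n\subset C$ for every $n$ almost surely. Since $C$ is compact, its hyperspace of nonempty compact subsets is compact in the Hausdorff metric, and hence so is its continuous image under $\beta_k$ in $\BCbot$, by virtue of the $1$-Lipschitz property of $\beta_k$ established in Proposition \ref{proposition barcodes for compact metric spaces}. The continuous function $T$ then attains a finite maximum $M$ on this compact image, yielding the deterministic bound $\|T(\beta_k(P_n))\|\leq M$ almost surely. If one instead adopts the Lipschitz hypothesis on $T$ used elsewhere in the paper, the estimate $d_H(P_n,C)\leq \operatorname{diam}(C)$ together with the $1$-Lipschitz character of $\beta_k$ and the Lipschitz property of $T$ yields the same conclusion more quantitatively, with $M=\|T(\beta_k(C))\|+\operatorname{Lip}(T)\cdot \operatorname{diam}(C)$.

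With pointwise convergence and a deterministic majorant in place, the Bochner dominated convergence theorem applies in the separable Banach space $V$ (the type~$2$ hypothesis plays no role here) and delivers
\[
\lim_{n\to \infty}\E\bigl[T(\beta_k(P_n))\bigr] = T(\beta_k(C)),
\]
as claimed. The only mild technicality worth recording is Bochner measurability of $T\circ\beta_k(P_n)$, which follows from continuity of $T\circ\beta_k$, measurability of $P_n$ as a $K(M)$-valued random variable, and separability of $V$ — the same standing hypothesis implicit in Theorem \ref{theorem hypothesis of lln and clt for compact}. The main ``obstacle'' is therefore only bookkeeping: once Theorem \ref{theorem almost sure limit of number of points} and the Lipschitz nature of the barcode map are at hand, the corollary is a textbook application of dominated convergence.
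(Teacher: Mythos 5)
Your proof is correct and matches the paper's implicit argument: the paper states the corollary as following ``directly'' from Theorem \ref{theorem almost sure limit of number of points}, and elsewhere (in Setup \ref{setup banach space} and Theorem \ref{theorem hypothesis of lln and clt for compact}) supplies exactly the ingredients you spell out — almost sure convergence $\beta_k(P_n)\to\beta_k(C)$, the deterministic bound coming from compactness of $K(C)$ and continuity of $T\circ\beta_k$, and Bochner dominated convergence. Your observation that the type-$2$ hypothesis is inert here (only separability of $V$ is used, for Bochner measurability) is accurate and a slight sharpening of the statement as printed.
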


For some specific underlying probability distributions, explicit calculations and more careful asymptotic estimates may be possible.
We consider the simplest (and paradigm) example of the circle $\S^1 \subset \R^2$, and i.i.d.~points sampled from it.
The interesting barcode here is the $\beta_1$-barcode, and it is uniquely determined by its length. 
In Theorem \ref{theorem expectation circle} we give an explicit formula for the expectation of the length.

The principal contribution of this work is that we devise 
  a new concrete general framework for analysis of random finite point clouds and their corresponding barcodes. 
The fact that the proofs of our main results are not technically involved is in our opinion a firm indication that the framework here proposed 
is natural and potentially very useful in studying more sophisticated TDA questions. 

\subsection*{Related work}
There are a number of related approaches to studying the statistical properties of persistent homological estimators (see~\cite{2017arXiv171004019C} for an overview). 

A closely related work is by Bubenik~\cite{Bub15}, who develops statistical inference via an embedding with ``persistence landscapes'', which is further studied in~\cite{ChazalFasy2015} and~\cite{pmlr-v37-chazal15}. Like Bubenik, we use CLT and LLN theory in Banach spaces, but on an object different from his. Unlike him, we study  natural geometric and probabilistic limits directly on the barcodes of large point clouds (Theorem~\ref{theorem almost sure limit of number of points} and Theorem~\ref{theorem consequence of NSW}).  
In particular, in Theorem~\ref{theorem consequence of NSW}, we establish a connection with the work of \cite{NSW08}. 
The just mentioned theorems are linked in spirit
to~\cite{Bobrowski2015}, who also study homology approximations based on large point clouds
drawn from a compact manifold, and their analysis is based on \cite{NSW08} as well.
Unlike us, for large $n$, Bobrowski and Mukherjee~\cite{Bobrowski2015} approximate simultaneously the homology of the manifold in a large range $m_n$ of degrees, with the homology in the corresponding degrees of the point cloud inflated by $r_n$, where $r_n$
is a power of $\frac{\log(n)}{n}$. 
In our context of persistent homology, we look at a continuum (a segment) of radii (away from zero) and aim to match, for large $n$, the homotopy type of the manifold with that of the inflated point cloud, simultaneously for all radii in thus fixed interval. 

Chazal et al.~\cite{Chazal:2015:CRP:2789272.2912112} establish convergence rates for metric spaces endowed with a probability measure that satisfies the $(a, b)$-standard assumption, see section 2.2 of that paper. In our study of almost sure convergence, we do not impose any conditions on the measure (except for compact support),  see Theorem \ref{theorem almost sure limit of number of points}. Hiraoka, Shirai, and Trinh~\cite{Hiraoka2018}, Owada~\cite{owada2018}, Adler 
and Owada~\cite{owada2017} also study limit theorems for persistence diagrams; but in their case, the point clouds are stationary point processes on $\RR^n$. Similar results also appear in~\cite{Chazal:2015:CRP:2789272.2912112}.

The foundational work of Mileyko, Mukherjee and Harer in~\cite{0266-5611-27-12-124007} introduces probability measures on barcode space, and these ideas are developed further (with Turner) in the context of Frech\'et means as ways of summarizing barcode distributions in~\cite{Turner2014}. Since we work with embeddings into Banach spaces, we do not need to rely on the theory developed in these papers.

Another active area of research in TDA deals with topological features of random simplicial complexes and noise~\cite{Bobrowski2018, Adler2014}. The present paper has a different focus, but it would be interesting to incorporate noise into our framework. This will be the subject of a forthcoming work.

\subsection*{Outline of the article}
In Section \ref{section persistent homology and barcodes}, we recall the definition of persistent homology, barcodes, and the space of barcode representations. This is the basis for what follows. Most results presented in this section are not new, nevertheless we occasionally included arguments (such as for Lemma \ref{lemma hausdorff metric}) to make the text more self-contained. 
As explained above, equivalent statements to Proposition \ref{proposition barcodes for compact metric spaces}, where the definition of barcode representations is extended from finite point clouds to compact subsets of a given metric space (with the induced metric), already appeared in the literature. As our approach through completions is conceptually very clear, we nevertheless chose to include it. 
%A new result however is Proposition \ref{proposition barcodes for compact metric spaces} where the definition of barcode representations is extended from finite point clouds to compact subsets of a given metric space (with the induced metric). 
This definition is fundamental for the rest of the paper. The generalized barcode representations live in the completion $\BCbot$ of the space of barcodes with respect to the bottleneck distance, thus they can be thought of as representing barcodes consisting of countably many intervals with a finite metric distance to a given barcode.
 In the same spirit as Proposition \ref{proposition barcodes for compact metric spaces} we show in Theorem \ref{theorem tame is decomposable} that the filtration associated to a limit of tame functions also has an associated barcode representation.
 
Following Bubenik \cite[Section 3.2]{Bub15}, we study a Law of Large Numbers and a Central Limit Theorem in Section \ref{section clt}. The main new contribution here is Theorem \ref{theorem hypothesis of lln and clt for compact} as explained above. Section \ref{section true persistent homology} contains the probabilistic limit theorem \ref{theorem almost sure limit of number of points} for barcodes which are probably the most fundamental contribution of this article. It heavily relies on Lemma \ref{lemma almost sure limit of number of points} which is a more geometric limit theorem for random point clouds in the space of compact subspaces of $\R^d$.

Section \ref{section sphere} is independent of the preceding two sections and gives a hint at a more quantitative version of a limit theorem. For this we consider the simplest nontrivial example of a compact metric space in $\R^2$ -- the circle $\S^1$. We fix the number $n$ of points and we would like to determine the expected barcode of a random $n$-point cloud (consisting of independent uniform samples from $\S^1$). To give meaning to this idea, we need to find some quantity which determines the barcode and over which we can average (in order to speak of expectations). In this simple case, the elementary geometry of the circle (and of point clouds on it) only allow for a restricted barcode which is entirely determined by its length, see Corollary \ref{corollary bn leq one} and the discussion thereafter. We give explicit formulas for the \emph{expected length} for $n=3$ in Proposition \ref{theorem expectation circle 3points}  and arbitrary $n$ in Theorem \ref{theorem expectation circle}.

\enlargethispage{\baselineskip}

Such quantities as the length which determine the barcode completely can of course no longer be explicitly given for arbitrary compact submanifolds of $\R^d$ which is why in order to talk about expectation we consider embeddings (or more generally continuous maps) $T:\BCbot\to V$ to some Banach space $\left(V,\norm\cdot_V\right)$. Building on the work of Niyogi--Smale--Weinberger \cite{NSW08}, who investigated when an $\veps$-neighborhood of a random point sample on a compact submanifold of $\R^d$ is homotopy equivalent to that manifold, we give an estimate for the distance in $V$ of the expectation of the transform (under $T$) of the  barcode for a random $n$-point cloud for fixed $n$ from  the transform of the barcode for the manifold $M$ from which the point cloud is sampled. 

Section \ref{section embedding barcodes} shows that our hypotheses on the existence of a (Lipschitz continuous) map from the barcode space to a Banach space can be fulfilled using functions introduced in Kalisnik's work~\cite{Kal18}. Finally, Section \ref{section discussion} gives a glimpse at open problems in this context.

\subsection*{Notation and Conventions} Let $(M,d)$ be a metric space. 
For $x \in M$ and ${t\in \R_{\geq 0}}$, let $B_t(x)=\{y \in M \mid d(x,y)<t\}$ be the open $t$-ball of $x$ and $\overline{B}_t(x)=\{y \in M \mid d(x,y)\leq t\}$
 the closed $t$-ball around $x$. For a subset $P\subset M$ we will denote $P_t:=\{x\in M\mid d(x,P)\leq t\}$ the $t$-neighborhood of $P$ which is closed if $P$ is.
 
 We denote by $\sP(X)$ the power set of $X$ and by $F(X) \subset \sP(X)$ the set of finite nonempty subsets of $X$.
 Throughout this paper, we take homology groups with coefficients in a field $\mathbf k$. For $n\in\N$ denote by $[n]$ the set $\{1,\ldots,n\}$.
 
 Recall that a \emph{multiset} is a set $A$ together with multiplicities, i.e., a map $A\to \N_0$. We will usually suppress the map in the notation and just speak of a multiset $A$. Also, we will use set notation such as $A=\{x_1,x_2,x_3,\ldots\}$.
 
We use $\Theta$ for asymptotically comparable in the Big O notation. For example, $f=\Theta (\frac{\log(n)}{n})$ if and only if there exist positive constants $C_1$ and $C_2$ such that $C_1\frac{\log(n)}{n}\leq f(n) \leq C_2\frac{\log(n)}{n}$ for all large $n$.

\subsection*{Acknowledgements}

We thank Bernd Sturmfels for his interest and the MPI Leipzig where a large part of the article was written for the hospitality and excellent working conditions. Christian Lehn benefited from discussions with Joscha Diehl and Mateusz Micha\l{}ek. Vlada Limic benefited from discussions with Vitalii Konarovskyi.

Christian Lehn was supported by the DFG through the research grants Le 3093/2-2 and  Le 3093/3-1.
Vlada Limic was supported by the Friedrich Wilhelm Bessel Research Award from the Alexander von Humboldt Foundation.

%---------------------------------------------------------------------------------------------------------
\section{From persistent homology to barcodes}\label{section persistent homology and barcodes}
%---------------------------------------------------------------------------------------------------------

\subsection{Persistence}\label{section persistent homology}
In many applications data lies in a metric space, for example, in a subset of a Euclidean space with an inherited distance function.
From this (necessarily finite, and often large) sample one wishes to learn some basic characteristics, such as the number of components or the existence of holes and voids, of the underlying space from which we sampled. 
Finite metric spaces are discrete spaces, and as such do not per se have interesting topological structure in their own right. 
The philosophy of topological data analysis is that data does have an inherent topology and in order to 
uncover it, one assigns a 1-parameter family of topological spaces or a filtration to a finite metric space $M$~\cite{topodata, carlsson2014, elz-tps-02}. Applying the degree-$k$ homology functor $H_k$ to this filtration yields what is called a persistence module 
\cite{chazal:hal-01330678}.
\begin{definition}\label{definition persistence module}
Let $\k$ be a field. 
A {\bf persistence module} (over $\k$) is an indexed family of vector spaces 
\[
V=\left(\{V_t\}_{t\in \R},\{\phi_s^t\}_{s \leq t \in \R}\right)
\] 
of $\mathbf k$-vector spaces $V_t$ and linear maps $\phi_s^t:V_s \to V_t$ for every $s \leq t$ such that $\phi_t^t=\id_{V_t}$ and $\phi_r^t = \phi_s^t \circ \phi_r^s$ for all
 $r \leq s \leq t$.
\end{definition}

One could also replace the field $\k$ by a ring $R$ (e.g. $R=\Z$ is a natural choice) and define an $R$-persistence module by replacing $\k$-vector spaces by $R$-modules in the above definition. This might give finer information about the topology of the point clouds, but is also much more complicated from the representation theoretic point of view, see e.g. the discussion in \cite{topodata} before Theorem 2.10 (p. 267). For example, analogs of essential results like Gabriel's theorem (stated here as Theorem \ref{theorem tame is decomposable}) are not available for $R=\Z$. As our work builds on that in an essential way, we work with fields and vector spaces throughout the paper. 

Recall that if we work with field coefficients, homology is a collection of functors $(H_n)_{n\in \N_0}$ from the category of topological spaces to the category of $\k$-vector spaces. We refer the reader to standard textbooks such as 
Bredon \cite{Bre97} or Hatcher \cite{Hat02}. It is sometimes useful to consider \emph{reduced homology} whose definition we briefly recall: denote by $\pt$ the one point space. Then for every topological space $X$ there is a unique continuous map $p_X:X \to \pt$. One defines the reduced degree $k$ homology of $X$ as 
\[
\tilde H_k(X):= \ker\left( H_k(X) \to H_k(\pt)\right)
\]
where $H_k(X) \to H_k(\pt)$ is the map in homology induced by $p_X$. As $H_k(\pt)=\k$ if $k=0$ and is trivial otherwise, we have $H_k(X)=\tilde H_k(X)$ for every $k \neq 0$. Reduced homology is also a functor on the category of topological spaces.

\begin{definition}\label{definition sublevelset filtration}
Let $X$ be a topological space and let $f\colon X \to \R$ be a continuous function. 
This function defines a filtration, called the {\bf sublevelset filtration} of $(X, f)$, by setting ${X_{t}=f^{-1}\left((-\infty, t]\right)}$. 
For $k \in \N_0$ the sublevel set filtration of $(X, f)$ defines a persistence module $(\PH_k(X,f),\phi)$ by ${\PH_k(X,f)_t= \tilde H_k(X_{t})}$ and 
$\phi_s^t:\tilde H_k(X_{s}) \to \tilde H_k(X_{t})$ induced by the inclusion $X_{s} \xhookrightarrow{} X_{t}$. 
For $X\subset \R^d$ we will simply write $\PH_k(X)$ instead of $\PH_k(X,f)$ if $X \subset \R^d$ and ${f:\R^d \to \R}$ is the distance--to--$X$ function. We refer to $\PH_k(X)$ (respectively $\PH_k(X,f)$) as the {\bf persistent homology} in degree $k$ of $X$ (respectively of $(X,f)$).
\end{definition}

\begin{definition}\label{definition tame}
A persistence module $V$ is called {\bf tame} if all $V_t$ have finite dimension and there exist finitely many $t_1 < \ldots < t_m \in \R$ such that $\phi_s^t$ is an isomorphism whenever $s,t \in (t_i,t_{i+1})$ for some $i$ (where we set $t_0=-\infty$, $t_{m+1}=\infty$). The function $f$ is called {\bf tame} if the module $\PH_k(X,f)$ is tame for all $k$.
\end{definition}

\begin{example}\label{example non tame}
It is clear that for an arbitrary smooth manifold $M \subset \R^d$ the $k$-th persistence module $\PH_k(M)$ is not necessarily tame. Take for example a strictly decreasing sequence $(r_n)_{n \in \N}$ of positive rational numbers such that $\sum_{n \in \N} r_n < \infty$ and put $R_n:=\sum_{m=1}^n r_n$. If $M$ is the union over all ${n \in \N}$ of circles $K_n$ with radius $R_n$ centered at the origin, then the persistent homology $\PH_1(M)$ will decompose as a direct sum of interval modules and this decomposition will give rise to an element $b\in \BCbot\ohne \BC$.
\end{example}

In certain cases a persistence module can be expressed as a direct sum of ``interval modules'', which can be thought of as the building blocks of the theory. 
Here we have four types of intervals and recall the representation from~\cite{chazal:hal-01330678}:
\begin{center}
\begin{tabular}{ccc}
\footnotesize{interval}
&
\footnotesize{decorated pair}
\\
$(p,q)$ & $(p^+,q^-)$\\
$(p,q]$ & $(p^+,q^+)$\\
$[p,q)$ & $(p^-,q^-)$\\
$[p,q]$ & $(p^-,q^+)$\\
\end{tabular}
\end{center}

\begin{definition}\label{definition interval module}
For an interval $(p^*, q^*)$, where $^*$ is either $+$ or $-$, denote by $\mathbb{I}(p^*, q^*)$ the persistence module 
\[
 (\mathbb{I}(p^*, q^*))_t=\begin{cases}{\mathbf k}, & \textrm{ for } t \in (p^*, q^*)\\ 0, & \textrm{ otherwise}\end{cases} \textrm{ and }
 \phi_s^t = \begin{cases}\id_{{\mathbf k}}, & \textrm{ for } s \leq t, \textrm{ and }s, t \in (p^*, q^*)\\ 0, & \textrm{ otherwise}\end{cases}.
\]
\end{definition}
\begin{definition}\label{definition persistent homology}
A persistence module $V$  over ${\mathbf k}$ is called \emph{decomposable} if it can be decomposed as a direct sum
 \[
V \isom \bigoplus_{m\in \Lambda} \mathbb{I}(p_m^*, q_m^*),
 \]
where $\Lambda$ is some index set and $*\in\{+,-\}$. If $V$ is decomposable, then the {\bf barcode} associated to $V$ is the {\bf multiset}
\[
\left\{(p_m^*, q_m^*) \mid m\in \Lambda  \right\}.
\]
We call a decomposable persistence module $V$ \emph{of finite type} if $\Lambda$ is a finite set.
\end{definition}

\begin{remark}
The barcode of $V$ is also called the {\bf persistence} of $V$.
\end{remark}
Not all persistence modules decompose in this way~\cite{chazal:hal-01330678}, and there is a considerable body of literature trying to ascertain under 
which conditions persistence modules are decomposable~\cite{quiver, Chazal:2009:PPM:1542362.1542407, chazal:hal-01330678, CB15}. 
We will restrict to the case of most interest to us.
\begin{theorem}[Gabriel \cite{quiver}]\label{theorem tame is decomposable} Let $X$ be a topological space and let $f:X\to \R$ be a tame 
function in the sense of Definition~\ref{definition tame}. Then $\PH_k(X,f)$ is decomposable and of finite type.
\end{theorem}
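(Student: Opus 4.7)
The plan is to reduce the problem to a finite-dimensional representation of a linear quiver and then apply Gabriel's classification theorem.

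First, I will use tameness to extract a finite skeleton. By hypothesis, there exist $t_1 < \ldots < t_m$ such that every transition $\phi_s^t$ with $s, t \in (t_i, t_{i+1})$ is an isomorphism. Pick sample points $s_i \in (t_i, t_{i+1})$ for $i = 0, \ldots, m$ (with $t_0 = -\infty$, $t_{m+1} = +\infty$). Then the finite chain
\begin{equation*}
V_{s_0} \to V_{t_1} \to V_{s_1} \to V_{t_2} \to \cdots \to V_{t_m} \to V_{s_m},
\end{equation*}
together with finite-dimensionality of each $V_t$, constitutes a finite-dimensional representation of the equioriented $A_{2m+1}$ quiver. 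The isomorphisms on the open strata let us reconstruct $V_t$ for arbitrary $t$ canonically from this skeleton: for $t \in (t_i, t_{i+1})$ one has $V_t \isom V_{s_i}$ via $\phi_{s_i}^t$ (or its inverse), and all remaining transition maps are then forced by the compatibility relation $\phi_r^t = \phi_s^t \circ \phi_r^s$.

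Next, I will invoke Gabriel's theorem for equioriented $A_n$ quivers: every finite-dimensional representation decomposes, uniquely up to reordering, as a direct sum of indecomposables, which are precisely the interval representations supported on a contiguous subset of the vertex set, with identity maps between adjacent copies of $\mathbf k$. Each such interval summand of the skeleton will correspond to an interval module $\mathbb{I}(p^*, q^*)$ in $\PH_k(X,f)$: the endpoints $p$ and $q$ are read off from the leftmost and rightmost vertices of the support, lying in $\{t_1,\ldots,t_m\} \cup \{-\infty, +\infty\}$, and the decoration ($+$ versus $-$) at each endpoint is determined by whether that endpoint is a $t_i$-vertex (closed, inclusive) or an $s_i$-vertex (open, exclusive). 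Finiteness of the resulting decomposition is immediate from finite-dimensionality of the skeleton.

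The main subtlety, which I expect to be the core technical point, is verifying that the decomposition of the finite skeleton genuinely lifts to an internal direct sum decomposition of the entire $\R$-indexed persistence module, compatibly with \emph{all} transition maps $\phi_s^t$ and not only at the sample points. For this I will transport the decomposition at each sample space $V_{s_i}$ to every $V_t$ with $t \in (t_i, t_{i+1})$ using the canonical isomorphism $\phi_{s_i}^t$, and then check on each open stratum and at each jump point $t_i$ that the family of subspaces $\{V_t^{(\alpha)}\}_{t \in \R}$ associated to a fixed summand $\alpha$ is preserved by the structure maps and realizes the claimed interval module. Because tameness makes these transports canonical once a skeletal decomposition has been fixed, the verification amounts to unwinding definitions, and the theorem follows.
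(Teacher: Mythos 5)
Your proposal is correct and reconstructs the standard argument behind the paper's attribution: the paper offers no independent proof of this statement (it simply cites Gabriel \cite{quiver}), and your reduction of a tame $\R$-indexed module to a finite-dimensional representation of the equioriented $A_{2m+1}$ quiver via sample points $s_i$ in the constant strata, followed by Gabriel's classification of $A_n$ representations into interval summands, followed by transport of the summands back to all of $\R$ via the stratum isomorphisms, is exactly how that classification is brought to bear. One small imprecision worth flagging: when an interval summand of the skeleton begins or ends at an $s_i$-vertex, the corresponding decorated endpoint of the resulting interval module is nevertheless one of the critical values $t_j$ (namely $t_i^+$ on the left, or $t_{i+1}^-$ on the right), not $s_i$ itself, so your phrase ``read off from the leftmost and rightmost vertices of the support, lying in $\{t_1,\ldots,t_m\}\cup\{-\infty,+\infty\}$'' is slightly off since the bounding vertex may be an $s_i$ that does not lie in that set — but the decoration rule you intend, and the resulting decomposition, are correct.
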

\begin{example}\label{example tame}
Examples of $(X, f)$ with a tame function $f$ include:
\begin{itemize}
\item
$X$ a compact manifold and $f$ a Morse function (where tameness is the result of Morse theory, see \cite{Mil63} for a general reference to this classical field).
\item
$X$ a compact polyhedron and $f$ a piecewise linear function, see Theorem 2.2 in \cite{chazal:hal-01330678}.
\item $X=\R^d$ and $f$ the distance to $P$ function for a finite set $P \subset \R^d$. In this case, tameness is a direct consequence of the nerve theorem. A textbook reference for the general nerve theorem is e.g. \cite[Corollary 4G.3]{Hat02}.
\end{itemize}
\end{example}

Let $P \subset \R^d$ be a finite set and ${f:\R^d \to \R}$ the distance to $P$ function. Then $P_t = f^{-1}((-\infty,t])$ is just the closed $t$-neighborhood of $P$ and $\PH_k(P)_t=\tilde H_k(P_{t})$ is decomposable by Theorem \ref{theorem tame is decomposable} for $k \in \N_0$. Furthermore, all non-zero intervals appearing in the barcode are 
closed on the left and open on the right (also known as \emph{closed-open type}), or equivalently of the third 
type in the above table describing the decorated pair notation. 

We can of course define $\PH_k(P)$ even when $P$ is not finite as it may still be decomposable. 
For example, $\PH_k(P)$ is decomposable and of finite type for a semi-algebraic set $P$ as a consequence of Hardt's theorem, see the discussion in section 3.2 of~\cite{HW18}.

%---------------------------------------------------------------------------------------------------------
\subsection{Persistent Homology of Finite Subsets of Metric Spaces}\label{section metric spaces}
%---------------------------------------------------------------------------------------------------------

As mentioned in Example \ref{example tame}, the persistent homology $\PH_k(P)$ of a finite point cloud $P \subset \R^d$ can be calculated using the nerve theorem. It tells us in particular, that the homology of $P_t$ is the same as the homology of a simplicial complex, the so-called \emph{\v Cech complex} with parameter $t$. 

Recall that given a metric space $M$, a finite set $P\subset M$, and a parameter $t \geq 0$, the {\v{C}ech complex} $\check{C}_t(P)$ is the abstract simplicial complex whose vertex set is $P$, and where $\{x_0, x_1, \ldots, x_k\}$ spans a $k$-simplex if and only if $\bigcap_{i=0}^k\overline{B}_t(x_i) \neq \emptyset$. 
The \emph{\v{C}ech filtration} of $P$ is the nested family of \v{C}ech complexes obtained by varying parameter $t$ from $0$ to $\infty$. This can be used as a definition.

\begin{definition}\label{definition persistent homology for finite metric spaces}
Let $M$ be a metric space and let $P\subset M$ be a finite subset. For $k\in \N_0$ we define the persistent homology $\PH_k(P)$ in degree $k$ of $P$ to be the persistence module obtained from taking the homology of the nested family of \v{C}ech complexes associated to $P$. In formulas:
\[
\PH_k(P)_t:= \tilde H_k(\check{C}_t(P)) \quad \textrm{for } t\in\Rplus.
\]
\end{definition}

From the construction and Theorem \ref{theorem tame is decomposable}, we immediately deduce

\begin{corollary}\label{corollary persistent homology for finite metric spaces}
Let $M$ be a metric space and let $P\subset M$ be a finite subset. Then for every $k\in \N_0$, the persistence module $\PH_k(P)$ is tame and decomposable.\qed
\end{corollary}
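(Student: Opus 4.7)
The plan is to verify tameness directly from the combinatorial structure of the \v{C}ech filtration and then invoke Gabriel's theorem (Theorem \ref{theorem tame is decomposable}) for decomposability. Since $P \subset M$ is finite, the \v{C}ech complex $\check C_t(P)$ has at most $2^{|P|}-1$ nonempty simplices for every $t$, so in particular $\tilde H_k(\check C_t(P))$ is finite-dimensional for every $t \in \Rplus$ and every $k \in \N_0$.

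Next I would show that the filtration $t \mapsto \check C_t(P)$ has only finitely many jump times. For each nonempty subset $\sigma = \{x_0,\ldots,x_k\} \subset P$, define
\[
t_\sigma := \inf\bigl\{ t \geq 0 \,\big|\, \textstyle\bigcap_{i=0}^k \overline B_t(x_i) \neq \emptyset \bigr\}.
\]
Note that the set $\{ t \geq 0 \mid \bigcap_i \overline B_t(x_i) \neq \emptyset \}$ is an interval of the form $[t_\sigma,\infty)$, since the closed balls are nested in $t$; in particular $\sigma$ is a simplex of $\check C_t(P)$ if and only if $t \geq t_\sigma$. Letting $t_1 < \cdots < t_m$ enumerate the distinct values of $t_\sigma$ as $\sigma$ ranges over the nonempty subsets of $P$, the simplicial complex $\check C_t(P)$ is constant for $t$ in each open interval $(t_i, t_{i+1})$ (with $t_0 := -\infty$ and $t_{m+1} := \infty$). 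Consequently, the map $\phi_s^t : \tilde H_k(\check C_s(P)) \to \tilde H_k(\check C_t(P))$ is induced by the identity inclusion and hence an isomorphism for all $s \leq t$ in the same open interval.

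Combining the two observations above with Definition \ref{definition tame}, we see that $\PH_k(P)$ is tame. Applying Gabriel's theorem (Theorem \ref{theorem tame is decomposable}) to $X = \check C_\infty(P)$ with the tame filtration function $f(\sigma) = t_\sigma$ (or more precisely the piecewise linear extension of $\sigma \mapsto t_\sigma$ on the geometric realization, which falls into the second bullet of Example \ref{example tame}) yields that $\PH_k(P)$ is decomposable and of finite type. There is no real obstacle here; the only care needed is in noting that the closed-ball definition of the \v{C}ech complex gives closed jump thresholds (so the intervals are of closed-open type, matching the comment made after Example \ref{example tame} in the ambient $\R^d$ case), and that all arguments extend from $\R^d$ to an arbitrary metric space $M$ because the \v{C}ech complex depends only on the finitely many pairwise distances within $P$.
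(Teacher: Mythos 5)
Your proof is correct and follows essentially the same route the paper intends (the paper treats this as immediate from the construction): tameness of $\PH_k(P)$ comes from the finite-dimensionality of the homology of a finite simplicial complex together with the finitely many threshold values $t_\sigma$, and Gabriel's theorem (Theorem \ref{theorem tame is decomposable}) then yields decomposability and finite type. One minor imprecision, which does not affect the tameness argument since that only needs finitely many thresholds: in a general metric space $M$ the set $\{t \geq 0 : \bigcap_i \overline{B}_t(x_i) \neq \emptyset\}$ is an interval with left endpoint $t_\sigma$ but need not contain $t_\sigma$ (closed balls in a non-proper metric space can have empty intersection exactly at the infimum), so your parenthetical claim that the intervals are automatically of closed-open type requires additional hypotheses on $M$ such as properness, which is precisely why the paper only asserts closed-open type in the ambient $\R^d$ setting.
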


Note that the two definitions (Definition \ref{definition sublevelset filtration} and Definition \ref{definition persistent homology for finite metric spaces}) of $\PH_k(P)$ for a finite subset $P \subset \R^d$ coincide.

%---------------------------------------------------------------------------------------------------------
\subsection{Barcode Space}\label{section barcodes}
%---------------------------------------------------------------------------------------------------------

In this subsection we describe a useful way of representing barcodes. 
Given an interval $I\subset \R_{\geq 0}$ of finite length, we encode it as a point 
$(x, d)\in\R_{\geq 0}^2$ where $x$ is the left endpoint of $I$ and $d$ is its length. 
The price we pay with this simplified representation is the loss of information about the 
inclusion of endpoints of the intervals. However, restricted to only one 
single interval type, this representation map is injective.
In the cases we are mainly interested in, this is indeed the case. We are led to the following
 
\begin{definition}\label{definition barcode}
Let us denote $A:= \coprod_{n \in \N_0} \RR_{\geq 0}^{2n}$. Let $\sim$ on $A$ be an equivalence relation generated by the relations
\begin{gather*}
(x_1,d_1,\ldots,x_n,d_n) \sim (y_1,e_1,\ldots,y_m,e_m) \Longleftrightarrow\\ (x_{\sigma(1)},d_{\sigma(1)},\ldots,x_{\sigma(n)},d_{\sigma(n)}) = (y_1,e_1,\ldots,y_n,e_n) \textrm{ and } \\ e_{n+1}=\ldots = e_m=0 \textrm{ for some } \sigma \in S_n, \ n\leq m \in \N_0
\end{gather*}
where $S_n$ denotes the symmetric group on $n$ elements. 
A {\bf barcode representation} is an equivalence class of $(x_1,d_1,\ldots,x_n,d_n)$ with respect to $\sim$. 
 The {\bf space of barcode representations} is the quotient of the disjoint union $A$ by the equivalence relation defined above:
\[
\BC:= \left. A \middle/  \sim \right..
\]
For simplicity, we will sometimes also  refer to $\BC$  as the {\bf barcode space}. We denote by $\BC_n \subset \BC$ the image of $\coprod_{m \leq n} \RR_{\geq 0}^{2m}$ under the canonical map $A \to \BC$.

We adopt the notation of Definition \ref{definition interval module}. Let $b=\{ \left(x_1^*, (x_1+d_1)^*\right), \ldots, \left(x_n^*, (x_n+d_n)^*\right)\}$ with $*\in\{+,-\}$ be a barcode such that all intervals have non-negative left endpoint $x_i$ and finite length $d_i$. Then we call $(x_1,d_1,\ldots,x_n,d_n) \in \BC$ the \textbf{barcode representation of the barcode $b$}.
\end{definition}

The equivalence relation $\sim$ defined above says that two barcode representations are equivalent if they coincide up to permutation of intervals and after deleting zero length intervals (i.e.\ $(x_i,d_i)$ with $d_i=0$). 

As already pointed out, given a finite subset $P$ of a metric space $M$, the persistence module $\PH_k(P)$ is decomposable and of finite type by Theorem \ref{theorem tame is decomposable}, Example \ref{example tame}, and Corollary \ref{corollary persistent homology for finite metric spaces}. 
Therefore, there is an associated barcode all of whose intervals have finite length. This -- and in fact only for $k=0$ -- is where we need to use reduced instead of ordinary homology. We can define the following barcode map from the set of finite nonempty subsets of a metric space $M$ to the barcode space.

\begin{definition}\label{definition barcode map}
Let us fix $k\in\N_0$. Given a finite subset $P$ of some metric space $M$, we denote by $\beta_k(P)$ the barcode representation of the barcode associated to the persistence module $\PH_k(P)$. This defines a map
\[
\beta_k : F(M) \to \BC
\]
where $F(M)$ is the set of finite nonempty subsets of $M$. We will refer to this map as the $k$-th barcode map.
\end{definition}

The barcode space comes equipped with natural metrics. In order to define them, we first specify the distance between any pair of intervals, as well as the distance between any interval and the equivalence class of the zero length interval which for this purpose is represented by the set $\Delta = \{(x, 0)\,|\, -\infty < x < \infty \}$.  We put
\[
\textrm{d}_\infty \left((x_1, d_1), (x_2, d_2)\right) = \max \left(|x_1-x_2|, |(x_1 +d_1) - (x_2+d_2)|\right).
\]
The distance between (the representation of) an interval and the set $\Delta$ is
\[
\textrm{d}_\infty ((x, d), \Delta) = \frac{d}{2}.
\]
Recall that $[n]=\{1,2,\ldots,n\}$.
 Let $b_1 = \{I_i\}_{i \in [n]}$ and $b_2 = \{J_j\}_{j \in [m]}$ be barcodes. For any bijection $\theta$ from a subset $A \subseteq [n]$ to $B \subseteq [m]$, the \emph{penalty} $P_\infty(\theta)$ of $\theta$ is
\begin{equation}\label{eq penalty bottleneck}
P_\infty (\theta) = \max\left(\max_{a\in A}\left(\textrm{d}_\infty(I_a, J_{\theta(a)})\right), \max_{a\in [n] \setminus A} \textrm{d}_\infty (I_a, \Delta), \max_{b\in [m]\setminus B} \textrm{d}_\infty (I_b, \Delta)\right).
\end{equation}
\begin{definition}
The \emph{bottleneck distance} is defined by
\[
\textrm{d}_\infty(b_1, b_2) = \min_\theta P_\infty(\theta),
\]
where with the notation above the minimum is over all possible bijections $\theta$ from subsets $A\subset [n]$ to subsets $B\subset [m]$. 
\end{definition}
There are other metrics also commonly used  for barcode spaces. Keeping the notation and changing the penalty \eqref{eq penalty bottleneck} for the bottleneck distance to
\begin{equation}\label{eq penalty wasserstein}
P_p(\theta) = \sum_{a\in A}\textrm{d}_\infty(I_a, J_{\theta(a)})^p +\sum_{a\in [n] \setminus A} \textrm{d}_\infty (I_a, \Delta)^p +\sum_{b\in [m]\setminus B} \textrm{d}_\infty (I_b, \Delta)^p
\end{equation}
yields the \emph{$p$th-Wasserstein distance} ($p\geq 1$) between $b_1, b_2 \in \BC$:
\[
\textrm{d}_p(b_1, b_2) = \left(\min_\theta P_p(\theta)\right)^{\frac{1}{p}}.
\]

Let us consider an example in order to get acquainted with these metrics.

\begin{example}\label{example bottleneck}
Let $\BC_1 \subset \BC$ consist of barcodes containing a single interval (bar). We set $b_1=(x_1, d_1), b_2=(x_2, d_2) \in \BC_1$ and calculate
\[
d_\infty(b_1,b_2)=\min\left(\max\left(|x_1-x_2|, |x_1+d_1-(x_2+d_2)|\right), \max\left(\frac{d_1}{2},\frac{d_2}{2}\right)\right).
\]
Then we see that if for arbitrary fixed $x_1,x_2 \in \Rplus$ the length of both intervals is small, the bottleneck distance between $b_1$ and $b_2$ is equally small, even if the intervals are far away from each other. 
The $p$th-Wasserstein distance behaves similarly. 
\end{example}

The barcode space $\BC$ is not a complete metric space, neither with respect to the bottleneck, 
nor with respect to any of the Wasserstein distances~\cite{0266-5611-27-12-124007}.
 This is a consequence of the fact that appending bars of smaller and smaller but nonzero length to any given barcode can easily yield a Cauchy 
 sequence of barcodes, with respect to any of
 the above metrics, and clearly without a limit in $\BC$.  
For the sake of concreteness, let $x >0$ be fixed, and consider the barcode $b_n$ consisting of all intervals $I_k:=(x,\frac{1}{k})$ for all $1\leq k\leq n$ (so that $b_n \in B_n$).
In this case, we have for $n<m$
\[
d(b_n,b_m) \leq \max_{n+1\leq k \leq m} d_\infty(\{I_k \},\Delta) = \frac{1}{2(n+1)}.
\]
A limit could only be a barcode consisting of infinitely many bars, which is impossible. 

In order to overcome this problem, we shall consider the completions 
\begin{equation}\label{eq completions}
\left(\BCwas,d_p\right) \quad  \textrm{and} \quad \left(\BCbot,d_\infty\right)
\end{equation}
of $\BC$ with respect to the Wasserstein and bottleneck distances.

%---------------------------------------------------------------------------------------------------------
%---------------------------------------------------------------------------------------------------------
%---------------------------------------------------------------------------------------------------------
\subsection{Limits of Barcodes}\label{section true persistent homology}
%---------------------------------------------------------------------------------------------------------
%---------------------------------------------------------------------------------------------------------
%---------------------------------------------------------------------------------------------------------

In subsection~\ref{section persistent homology} we recalled the classical construction of barcodes from finite point clouds. 
Here we present a generalization, which is natural in the context of our probabilistic investigations.
Let $(M,d)$ be a metric space and consider the family 
\[
K(M):=\left\{ Y \subset M \mid Y \textrm{ compact, non-empty}\right\}
\]
 of all compact subsets of $M$. 
Together with the Hausdorff distance
\begin{equation}\label{eq hausdorff metric}
d_H(A,B):= \max\left(\inf\left\{t\in\Rplus\mid A \subset B_t\right\},\inf\left\{t\in\Rplus\mid B \subset A_t\right\}\right),
\end{equation}
the set $K(M)$ becomes a metric space. 
It is well known that $(K(M),d_H)$ is complete whenever $(M,d)$ is complete, and compact whenever $(M,d)$ is compact. 
Given a bounded subset $A \subset M$, we consider the continuous function, the ``distance from $A$'', defined by
\[
d_A:M\to \Rplus, \quad d_A(x):=\inf\{d(x,y) \mid y \in A \}.
\]

We can also describe compact metric spaces in terms of functions. The following result should be rather standard, but it turned out to be easier to give a proof than to find an exact reference.
%---------------------------------------------------------------------------------------------------------
\begin{lemma}\label{lemma hausdorff metric}
Let $M$ be a metric space, and denote by $\left(L_\infty(M),\norm{\cdot}_\infty\right)$ the Banach space of bounded functions $f:M \to \R$, equipped with the 
supremum norm.
\begin{enumerate}
	\item\label{lemma hausdorff metric item one} For $A,B \in K(M)$ the function $d_A-d_B$ is bounded on $M$.
	\item\label{lemma hausdorff metric item two} 
The function $n_\infty:K(M) \times K(M) \to \R_{\geq 0}$, $n_\infty(A,B):=\norm{d_A-d_B}_\infty$ defines a metric on $K(M)$
such that $$(K(M),d_H) \to (K(M),n_\infty), \quad A\mapsto A $$ is an isometry.
	\item\label{lemma hausdorff metric item three} If $M$ is compact, then the function $d_A$ for $A\subset M$ is bounded and $A\mapsto d_A$ defines a continuous injective map 
$$(K(M),d_H)\into \left(L_\infty(M),\norm{\cdot}_\infty\right),$$ 
which is an isometry of metric spaces onto its image.
\end{enumerate}
\end{lemma}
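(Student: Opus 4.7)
The plan is to reduce all three items to the single identity $n_\infty(A,B) = d_H(A,B)$ for $A,B \in K(M)$. The one fact I would use repeatedly is that for any nonempty $A \subset M$, the function $d_A$ is $1$-Lipschitz (immediate from the triangle inequality), which in particular makes $d_A$ continuous on $M$.

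To establish the identity, I would prove both inequalities separately. For $d_H(A,B) \leq n_\infty(A,B)$: if $t \geq n_\infty(A,B)$ and $a \in A$, then $d_B(a) = |d_A(a) - d_B(a)| \leq t$, so $a \in B_t$; thus $A \subset B_t$, and symmetrically $B \subset A_t$, giving $d_H(A,B) \leq t$. For the reverse inequality, set $t = d_H(A,B)$, which is finite since compact subsets of $M$ are bounded. Given any $x \in M$, compactness of $B$ furnishes $b^* \in B$ with $d_B(x) = d(x, b^*)$; since $b^* \in B \subset A_t$, compactness of $A$ yields $a^* \in A$ with $d(a^*, b^*) \leq t$. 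Then
\[
d_A(x) \leq d(x, a^*) \leq d(x, b^*) + d(b^*, a^*) \leq d_B(x) + t,
\]
and by symmetry $|d_A(x) - d_B(x)| \leq t$ for every $x \in M$. Taking the supremum yields $n_\infty(A,B) \leq d_H(A,B)$. The main care I expect to exercise is in this second direction, where compactness of both $A$ and $B$ is essential in order to realize the infima defining $d_A$ and $d_B$; with weaker hypotheses (merely closed and bounded, say) the argument would need an $\varepsilon$-perturbation.

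All three items follow immediately from the identity. Since $d_H(A,B)$ is finite, so is $n_\infty(A,B)$, which yields (1). Since $d_H$ is already known to be a metric on $K(M)$ and coincides with $n_\infty$, the function $n_\infty$ is a metric and the identity map on $K(M)$ is trivially the claimed isometry, which is (2). For (3), when $M$ is compact the continuous function $d_A$ is automatically bounded on $M$, so the assignment $A \mapsto d_A$ lands in $L_\infty(M)$; by (2) it is an isometry onto its image, hence in particular continuous and injective.
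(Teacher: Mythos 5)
Your proof is correct and takes essentially the same approach as the paper: both directions of the identity $n_\infty(A,B) = d_H(A,B)$ are established using compactness to realize the infima defining $d_A$ and $d_B$, and the remaining items follow. The only difference is organizational — you derive item (1) as an immediate corollary of the identity (since $d_H(A,B)<\infty$), whereas the paper proves (1) separately with a direct bound by $R=\sup_{a\in A,\,b\in B}d(a,b)$; your streamlining is slightly cleaner but not a different method.
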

%---------------------------------------------------------------------------------------------------------
\begin{proof}
For \eqref{lemma hausdorff metric item one} let us denote $R:=\sup_{a\in A, b\in B} d(a,b)$ which is $< \infty$ by compactness. For a given $x\in M$, we choose $a\in A, b \in B$ such that $d_A(x)=d(a,x)$, $d_B(x)=d(b,x)$ which is again possible by compactness.
Without loss of generality $d_A(x) \geq d_B(x)$. The triangle inequality gives
\[
\abs{d_A(x) - d_B(x)} = d(a,x)-d(b,x) \leq d(a,b) 
 \leq R
\]
and the claim follows.

For \eqref{lemma hausdorff metric item two} let $A,B \in K(M)$. We will first prove that $d_H(A,B) \leq \norm{d_A-d_B}_\infty$. 
Suppose that $\abs{d_A(x)-d_B(x)} \leq t$ for some $t\in \Rplus$ and for all $x \in M$. 
Then in particular for $a \in A$ we deduce $d_B(a) \leq t$ so that $A \subset B_t$. 
By symmetry the other inclusion follows and therefore $d_H(A,B) \leq t$.

For the inequality in the other direction, let us now assume that for some $t\in \Rplus$ we find $A \subset B_t$ and $B\subset A_t$. Let $x \in M$ be given. It suffices to show that $\abs{d_A(x)-d_B(x)} \leq t$. We may assume $d_A(x)-d_B(x) > 0$. By compactness, the infimum is a minimum so that $d_A(x) = d(a,x)$ and $d_B(x) = d(b,x)$ for some $a \in A$, $b \in B$. As $B \subset A_t$ there is $a' \in A$ such that $d(a',b) \leq t$. From $d_A(x) = d(a,x)$ it follows that $d(a',x) \geq d(a,x)$ and we infer
\[
\abs{d_A(x)-d_B(x)} = d(a,x)-d(b,x) \leq d(a',x) -d(b,x) \leq d(a',b) \leq t.
\]
Thus $\norm{d_A-d_B}_\infty \leq t$.

Let us now prove \eqref{lemma hausdorff metric item three}. 
Every compact metric space has a finite radius $R:=\sup_{x,y\in M} d(x,y)$. Obviously $\norm{d_A}_\infty \leq R$. 
The rest of the claim follows from \eqref{lemma hausdorff metric item two}.
\end{proof}
%---------------------------------------------------------------------------------------------------------

%---------------------------------------------------------------------------------------------------------
\begin{proposition}\label{proposition barcodes for compact metric spaces} Let $k \in \N_0$ be a nonnegative integer and $M$ be a metric space.
\begin{enumerate}
	\item\label{proposition barcodes for compact metric spaces item one} 
The map $\beta_k: F(M) \to \BCbot$ is Lipschitz continuous with Lipschitz constant equal to $1$.
	\item\label{proposition barcodes for compact metric spaces item two} 
There is a unique continuous extension $K(M) \to \BCbot$ of $\beta_k:F(M) \to \BC \subset \BCbot$. 
We will denote it by the same symbol $\beta_k$.
The extended map is also Lipschitz continuous with Lipschitz constant $1$.
\end{enumerate}
\end{proposition}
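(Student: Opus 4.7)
For part \eqref{proposition barcodes for compact metric spaces item one}, the plan is to prove the classical stability statement ``bottleneck $\leq$ Hausdorff'' for \v Cech barcodes by constructing an explicit $\eps$-interleaving. Fix $P,Q \in F(M)$ with $d_H(P,Q)\leq \eps$ and choose nearest-neighbor maps $\varphi\colon P\to Q$, $\psi\colon Q\to P$ with $d(p,\varphi(p))\leq \eps$ and $d(q,\psi(q))\leq \eps$ for all $p,q$. I would check that if $\{x_0,\ldots,x_k\}$ is a simplex of $\check C_t(P)$ with witness $y\in \bigcap_i \overline B_t(x_i)$, then $d(\varphi(x_i),y)\leq t+\eps$, so $\varphi$ induces a simplicial map $\check C_t(P)\to \check C_{t+\eps}(Q)$; similarly for $\psi$. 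Moreover, for any simplex $\{x_0,\ldots,x_k\}\in \check C_t(P)$ with witness $y$, the enlarged vertex set $\{x_0,\ldots,x_k,\psi\varphi(x_0),\ldots,\psi\varphi(x_k)\}$ is again a simplex of $\check C_{t+2\eps}(P)$ (same witness $y$, now with radius bound $t+2\eps$), so $\psi\circ\varphi$ is contiguous to the inclusion. Applying $\tilde H_k$ yields an $\eps$-interleaving between $\PH_k(P)$ and $\PH_k(Q)$, and the algebraic stability theorem (which, via Gabriel's Theorem~\ref{theorem tame is decomposable} for tame modules of finite type, translates the interleaving into bottleneck distance $\leq \eps$ of the associated barcodes) concludes that $d_\infty(\beta_k(P),\beta_k(Q))\leq \eps$.

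For part \eqref{proposition barcodes for compact metric spaces item two}, the idea is the standard extension-by-uniform-continuity principle. First I would verify that $F(M)$ is dense in $K(M)$ with respect to $d_H$: any $K\in K(M)$ is totally bounded, so for every $\eps>0$ it admits a finite $\eps$-net $P_\eps\subset K$ with $d_H(P_\eps,K)\leq \eps$. Since $\beta_k\colon F(M)\to \BC\subset\BCbot$ is $1$-Lipschitz by part \eqref{proposition barcodes for compact metric spaces item one}, and $\BCbot$ is complete by construction (see the definition in \eqref{eq completions}), $\beta_k$ extends uniquely to a $1$-Lipschitz map $\widetilde\beta_k\colon K(M)\to \BCbot$. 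Concretely, for $K\in K(M)$ pick any sequence $(P_n)\subset F(M)$ with $P_n\to K$ in $d_H$; then $(\beta_k(P_n))$ is Cauchy in $\BC$, and one defines $\beta_k(K):=\lim_n \beta_k(P_n)\in \BCbot$, independent of the choice of sequence.

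The main obstacle, in my view, is the first part — specifically, writing down the interleaving cleanly enough that Gabriel/algebraic-stability can be invoked with no ambiguity about how the endpoint decorations $(p^-,q^-)$ of the closed-open intervals behave under $\eps$-shifts. One could also bypass some of this by using Lemma~\ref{lemma hausdorff metric}\eqref{lemma hausdorff metric item two}, which identifies $d_H(P,Q)$ with $\|d_P-d_Q\|_\infty$ in an ambient space, and then invoke the functional stability theorem of Cohen--Steiner--Edelsbrunner--Harer applied to the sublevel-set filtrations of $d_P,d_Q$ (after noting via the Nerve Theorem that these filtrations realize $\PH_k(P)$ and $\PH_k(Q)$ for $M$ embedded in an appropriate ambient geodesic space, or intrinsically working with the \v Cech filtration in $M$). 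Either route produces the Lipschitz constant $1$; the interleaving argument has the advantage of being self-contained and avoids invoking an ambient space, matching the claim for general metric spaces $M$.
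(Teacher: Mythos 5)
Your proposal is correct and takes essentially the same approach as the paper: part (2) is proved by the same density-plus-Lipschitz-extension argument, and for part (1) the paper simply cites \cite{Chazal:2009:GSS:1735603.1735622}, whose content is exactly the $\eps$-interleaving of \v Cech filtrations plus algebraic stability that you spell out. You have effectively unpacked the citation rather than taken a genuinely different route.
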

\begin{proof}
The claim in (1) was proved in~\cite{Chazal:2009:GSS:1735603.1735622}.

For (2), we first show that $F(M) \subset K(M)$ is dense. Given a compact subset $K\subset M$ and $\veps > 0$ we will show that $B_\veps(K):=\{A \in K(M)\mid d_H(A,K)<\veps\} \subset K(M)$ intersects $F(M)$ nontrivially. Since $K$ is compact, there is $P=\{x_1,\ldots,x_n\}\subset  K$ such that $K \subset B_\veps(P)$. On the other hand, $P \subset K \subset K_\veps$ so that $d_H(P,K) < \veps$. As Lipschitz functions are in particular uniformly continuous, $\beta_k:F(M) \to \BCbot$ extends to $K(M)$. The fact that the extension is again Lipschitz with the same Lipschitz constant is also standard.
\end{proof}
%---------------------------------------------------------------------------------------------------------
\begin{definition}
We call the map $\beta_k:K(M)\to \BCbot$ the barcode map and for a compact set $K \subset M$ we refer to $\beta_k(K)$ as the $k$-th barcode of $K$.
\end{definition}

This definition and the definition of \cite{Chazal:2009:GSS:1735603.1735622,chazal:hal-01330678} are equivalent concepts as both produce barcode maps which are continuous functions from the space $K(M)$ to some space of barcodes and they coincide on the dense subset $F(M) \subset K(M)$ of finite subsets of $M$.

%As in the above proof, compactness immediately implies the following statement that we record for further use.
%\begin{lemma}
%Let $M$ be a metric space. Then $F(M) \subset K(M)$ is dense with respect to the Hausdorff distance $d_H$.\qed
%\end{lemma}
%---------------------------------------------------------------------------------------------------------
\begin{remark}\label{remark barcodes for totally bounded spaces}
Note that the barcode map $\beta_k$ can easily be extended to a map on totally bounded sets. Since in the proof of Proposition \ref{proposition barcodes for compact metric spaces} we reduced to the case where $M$ is complete, then a totally bounded subset is  compact if and only if it is closed. Therefore, for every totally bounded set there is a compact set (its closure) at Hausdorff distance zero (see \eqref{eq hausdorff metric}, although totally bounded spaces only form a pseudo metric space for the Hausdorff spaces). One way to define a barcode for a totally bounded set is therefore to define it via Proposition \ref{proposition barcodes for compact metric spaces}  as the barcode of its completion which is compact.
\end{remark}
%---------------------------------------------------------------------------------------------------------
One can naturally generalize Proposition \ref{proposition barcodes for compact metric spaces} to the setting of tame functions. 
\begin{definition}
Let $M$ be a metric space. We denote by $C(M,\R)$ the set of continuous functions with values in $\R$ and endow it with the metric
\[
d:C(M,\R) \times C(M,\R) \to[] [0,\infty], \quad d(f,g)= \norm{f-g}_\infty.
\]
We will denote by $T(M) \subset C(M,\R)$ the subset of tame functions and by $\wh T(M)$ its completion.
\end{definition}
%---------------------------------------------------------------------------------------------------------
There is no harm in allowing the metric to take value  $\infty$. 
The induced topology is the same as the one induced by the metric
\[
(f,g) \mapsto d'(f,g):=\frac{d(f,g)}{1+d(f,g)} \in [0,1].
\]
The metrics $d,d'$ also feature the same notion of Cauchy sequences. Working with $d$ is however more appropriate for the inequalities we need.
%---------------------------------------------------------------------------------------------------------
\begin{theorem}\label{theorem barcodes for tame functions} Let $k \in \N_0$ be a nonnegative integer and let $M$ be a metric space.
\begin{enumerate}
	\item\label{proposition barcodes for tame functions item one} The map $\beta_k: T(M) \to \BC$ is Lipschitz continuous with Lipschitz constant equal to $1$.
	\item\label{proposition barcodes for tame functions item two} There is a unique continuous extension $\wh T(M) \to \BCbot$ of $\beta_k:T(M) \to \BC \subset \BCbot$. 
As before we denote it by the same letter, and note that the extension is also $1$-Lipschitz continuous.
\end{enumerate}
\end{theorem}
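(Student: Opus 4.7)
The plan is to imitate the structure of the proof of Proposition \ref{proposition barcodes for compact metric spaces}, replacing the Hausdorff-distance stability statement used there by the classical stability theorem for sublevel-set persistence. Concretely, part \eqref{proposition barcodes for tame functions item one} will be an invocation of a well-known stability result and of Gabriel's theorem, while part \eqref{proposition barcodes for tame functions item two} is a formal extension-by-density argument identical in spirit to the one already carried out for compact subsets.

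For \eqref{proposition barcodes for tame functions item one}, given $f \in T(M)$, Theorem \ref{theorem tame is decomposable} applied to each $\PH_k(M,f)$ yields that $\PH_k(M,f)$ is decomposable and of finite type, so its barcode is a finite multiset of intervals with finite length and $\beta_k(f) \in \BC$. Hence the map $\beta_k : T(M) \to \BC$ is well defined. The 1-Lipschitz estimate
\[
d_\infty\bigl(\beta_k(f),\beta_k(g)\bigr) \;\leq\; \norm{f-g}_\infty \qquad \text{for all } f,g \in T(M)
\]
is exactly the stability theorem of Cohen--Steiner--Edelsbrunner--Harer (the same result cited as \cite{CSEH07} earlier in the paper), applied to the sublevel-set filtrations of $f$ and $g$. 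Note that if $\norm{f-g}_\infty = \infty$ the inequality is vacuous, while the metric $d$ on $T(M)$ being $[0,\infty]$-valued causes no difficulty since Cauchy sequences in $T(M)$ have, past some index, pairwise finite distances.

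For \eqref{proposition barcodes for tame functions item two}, by construction of the completion $\wh T(M)$, the subset $T(M)$ is dense in $\wh T(M)$, and by construction of $\BCbot$ the target is a complete metric space. A 1-Lipschitz map from a dense subset of a metric space into a complete metric space admits a unique continuous extension, which remains 1-Lipschitz: for $f,g\in \wh T(M)$ with approximating sequences $f_n, g_n \in T(M)$ one defines $\beta_k(f) := \lim \beta_k(f_n)$ (well defined and independent of the sequence by uniform continuity of $\beta_k$ on $T(M)$ and completeness of $\BCbot$), and continuity of $d_\infty$ together with the estimate on $T(M)$ yields
\[
d_\infty\bigl(\beta_k(f),\beta_k(g)\bigr) = \lim_{n\to\infty} d_\infty\bigl(\beta_k(f_n),\beta_k(g_n)\bigr) \leq \lim_{n\to\infty} \norm{f_n - g_n}_\infty = \norm{f-g}_\infty.
\]

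The main and essentially only nontrivial input is the sublevel-set stability theorem used in part \eqref{proposition barcodes for tame functions item one}; once this is invoked, both statements reduce to standard facts about Lipschitz maps and completions. No subtlety about reduced versus unreduced homology arises here, since the definition of $\PH_k$ used throughout Section \ref{section persistent homology} (and hence of $\beta_k$) is fixed from the start and the stability theorem is compatible with that choice.
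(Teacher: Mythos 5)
Your proposal is correct and mirrors the paper's proof: part (1) is an invocation of the sublevel-set stability theorem together with Gabriel's theorem for well-definedness, and part (2) is the standard extension-by-density argument for Lipschitz maps into a complete space. The only cosmetic difference is that the paper cites the Chazal--Cohen-Steiner--Glisse--Guibas--Oudot version of stability (\cite{Chazal:2009:GSS:1735603.1735622}) whereas you cite the original Cohen-Steiner--Edelsbrunner--Harer result, but both give the same $1$-Lipschitz bound in this setting.
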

\begin{proof}
As in Proposition \ref{proposition barcodes for compact metric spaces}, the first part follows from~\cite{Chazal:2009:GSS:1735603.1735622}. The second part is implied by the same extension argument for uniformly continuous maps. Note that Lipschitz continuity implies uniform continuity.
\end{proof}
%---------------------------------------------------------------------------------------------------------

%---------------------------------------------------------------------------------------------------------
%---------------------------------------------------------------------------------------------------------
%---------------------------------------------------------------------------------------------------------
\section{Barcodes of compact sets as almost sure limits}\label{section approximation two}
%---------------------------------------------------------------------------------------------------------
%---------------------------------------------------------------------------------------------------------
%---------------------------------------------------------------------------------------------------------

In this section, we will address a very natural convergence problem for stochastic barcodes. It is somewhat surprising that this question has never been addressed before, at least not in full generality. 

Let $M$ be a metric space. We consider i.i.d.~$M$-valued random variables $X_1,X_2, \ldots$ whose distribution has support equal to a compact subset $C \subset M$. Recall that the \emph{support} of a measure $\mu$ on a $\sigma$-algebra containing the Borel $\sigma$-algebra $B(M)$ is defined to be the closed subset
\[
\supp(\mu):= \{ x \in M \mid \forall \veps >0: \mu(B_\veps(x))> 0\}.
\]
Let us consider the finite random set $P_n=\{X_1,X_2, \ldots,X_n\}$ and for a fixed $k$ the sequence of barcodes $\left(\beta_k(P_n)\right)_{n \in \N}$. We would like to describe the limit of this sequence for $n\to \infty$. If $P_n$ were a deterministic sequence approaching $C$ in the Hausdorff distance, then the limit of $\beta_k(P_n)$ would be $\beta_k(C)$ by definition of the latter, see Proposition \ref{proposition barcodes for compact metric spaces}. Now, the $P_n$ are random variables, and we prove the following

%---------------------------------------------------------------------------------------------------------
\begin{theorem}\label{theorem almost sure limit of number of points}
Let $M$ be a metric space, let $X_1,X_2, \ldots$ be i.i.d.~$M$-valued random variables, let $k\in \N_0$, and put $P_n=\{X_1,X_2, \ldots,X_n\}$.
If the distribution of the $X_i$ has support equal to a compact subset $C \subset M$, then
\[
\beta_k(C) = \lim_{n\to \infty} \beta_k(P_n) \ \ \textrm{ almost surely.}
\]
\end{theorem}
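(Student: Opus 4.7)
\textbf{Proof plan for Theorem \ref{theorem almost sure limit of number of points}.} The plan is to reduce the statement to the almost sure Hausdorff convergence $d_H(P_n, C) \to 0$, and then to establish the latter by a standard covering-plus-Borel--Cantelli argument, exploiting the defining property of the support. The key input that makes everything work is Proposition \ref{proposition barcodes for compact metric spaces}: the barcode map $\beta_k:K(M)\to\BCbot$ is $1$-Lipschitz for the Hausdorff distance, so any almost sure convergence $P_n\to C$ in $K(M)$ automatically transports through $\beta_k$ to convergence in $\BCbot$.

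\emph{Step 1: reduce to Hausdorff convergence of the point clouds.} By the definition of support, each $X_i$ lies in $C$ almost surely; indeed, for $x\notin C$ there is a ball $B_{\eps_x}(x)$ of zero measure, and the complement $M\setminus C$ admits a countable subcover by such balls through the separable (compact) set $C$ it meets. Hence with probability one, $P_n\subset C$ for every $n$, and consequently $\sup_{x\in P_n} d(x,C)=0$. It then suffices to show that a.s., for every $\eps>0$, one has $C\subset (P_n)_\eps$ for all sufficiently large $n$, since this gives $d_H(P_n,C)\to 0$.

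\emph{Step 2: covering argument and Borel--Cantelli.} Fix $\eps>0$. Compactness of $C$ yields a finite $\eps$-net $\{y_1,\ldots,y_N\}\subset C$ such that $C\subset \bigcup_{i=1}^N B_\eps(y_i)$. Since each $y_i$ lies in $\supp(\mu)$, the probability $p_i:=\mu(B_\eps(y_i))$ is strictly positive. For each $i$ the event $E_i^n:=\{X_j\in B_\eps(y_i)\text{ for some }j\leq n\}$ satisfies $\P\bigl((E_i^n)^c\bigr)=(1-p_i)^n$, and this probability is summable in $n$. By the first Borel--Cantelli lemma (or directly by monotone convergence), with probability one every $E_i^n$ holds for all $n$ large enough. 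Intersecting over the finitely many $i$ gives an almost sure event on which some $X_{j(i)}\in B_\eps(y_i)$ for each $i$, once $n$ is large enough. On this event each $c\in C$ is within $\eps$ of some $y_i$, which is within $\eps$ of some $X_{j(i)}\in P_n$, hence $C\subset (P_n)_{2\eps}$.

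\emph{Step 3: union over a null sequence and conclusion.} The previous step produces, for each fixed $\eps>0$, a full-measure event on which $d_H(P_n,C)\leq 2\eps$ eventually. Taking a countable intersection over $\eps=1/m$, $m\in\N$, we obtain a single full-measure event on which $d_H(P_n,C)\to 0$. Applying the $1$-Lipschitz extension $\beta_k:K(M)\to\BCbot$ from Proposition \ref{proposition barcodes for compact metric spaces} yields $\beta_k(P_n)\to \beta_k(C)$ in $\BCbot$ almost surely.

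\emph{Main obstacle.} The conceptual content is genuinely light once one has Proposition \ref{proposition barcodes for compact metric spaces}; the only delicate point is the first reduction, namely the fact that $X_i\in C$ almost surely. One must be slightly careful here because $M$ itself is not assumed separable: the statement uses that the complement of the support is a union of null balls, and a countable subcover exists because $M\setminus C$ is open and the obstructing points accumulate only to the separable set $C$. With that taken care of, the covering plus Borel--Cantelli step is entirely routine.
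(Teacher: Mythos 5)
Your proof is correct and follows essentially the same route as the paper: both reduce via the $1$-Lipschitz property of $\beta_k$ (Proposition \ref{proposition barcodes for compact metric spaces}) to showing $d_H(P_n,C)\to 0$ almost surely, and both establish this by covering $C$ with finitely many small balls centered at support points, using that the support condition forces these balls to have positive measure. The only cosmetic difference is at the end, where you invoke Borel--Cantelli (or monotone convergence of nested events) while the paper first uses monotonicity of $d_H(C,P_n)$ to guarantee an almost-sure limit exists and then upgrades convergence in probability to identify that limit as zero; your separate Step~1 on $\mu(M\setminus C)=0$ is a reasonable thing to flag (the paper states it without comment), though your separability sketch there is not quite airtight.
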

%---------------------------------------------------------------------------------------------------------
This theorem immediately results from the following lemma by continuity of the barcode map, see Proposition \ref{proposition barcodes for compact metric spaces}. This statement is a `folk theorem', and a variation of it with extra assumptions appears in the work of Cuevas and Fraiman~\cite{10.2307/2959033}. We include it here for completeness because we could not find this precise statement in the literature.
\begin{lemma}\label{lemma almost sure limit of number of points}
Let $M$ be a metric space, let $X_1,X_2, \ldots$ be i.i.d.~$M$-valued random variables, and put $P_n=\{X_1,X_2, \ldots,X_n\}$.
If the distribution of the $X_i$ has support equal to a compact subset $C \subset M$, then
\[
 \lim_{n\to \infty}d_H(C,P_n)= 0 \textrm{ almost surely.}
\]
\end{lemma}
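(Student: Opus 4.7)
The plan is to exploit the compactness of $C$ and the positivity of $\mu$-mass near every point of $C$, combined with a first Borel--Cantelli argument, to force $P_n$ to become $\veps$-dense in $C$ for every $\veps > 0$.

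First I would reduce to the case $P_n \subset C$ almost surely. Since $C = \supp(\mu)$ is compact, in particular separable and closed, and since every point of $M \setminus C$ admits a $\mu$-null ball, a standard Lindel\"of argument (applied to an open neighborhood of $C$ in a separable piece, or more directly by replacing $M$ with $C$ since it carries all the mass) yields $\mu(C) = 1$. The inclusion $P_n \subset C \subset C_t$ then holds trivially for every $t \geq 0$, so the Hausdorff distance reduces to the single quantity $\sup_{c \in C} d(c, P_n)$ that remains to be controlled.

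Next, fix $\veps > 0$. By compactness I can extract a finite cover $C \subset \bigcup_{i=1}^{N} B_\veps(y_i)$ with centers $y_i \in C$. Because each $y_i$ lies in $\supp(\mu)$, the probability $p_i := \mu(B_\veps(y_i))$ is strictly positive. The event that none of $X_1, \ldots, X_n$ lands in $B_\veps(y_i)$ has probability $(1-p_i)^n$, which is summable in $n$; by the first Borel--Cantelli lemma applied to each of the finitely many $i \in \{1, \ldots, N\}$, almost surely there is a (random) threshold $n_0$ past which every ball $B_\veps(y_i)$ contains at least one sample point. A single triangle inequality then gives $\sup_{c \in C} d(c, P_n) \leq 2\veps$, and hence $d_H(P_n, C) \leq 2\veps$, for all $n \geq n_0$.

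Finally, intersecting the corresponding almost sure events along the countable sequence $\veps_j = 1/j$, $j \in \N$, produces an event of probability one on which $\limsup_{n \to \infty} d_H(P_n, C) \leq 2/j$ for every $j$, so $d_H(P_n, C) \to 0$ almost surely. The only mild obstacle in the argument is the measure-theoretic justification of $\mu(C) = 1$ in a possibly non-separable ambient $M$; this is defused by invoking the separability of the compact set $C$ itself. Everything else is a routine Borel--Cantelli argument packaged around a finite $\veps$-net of $C$.
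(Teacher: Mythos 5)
Your proof is correct and rests on the same geometric core as the paper's: a finite $\veps$-net $\{y_1,\ldots,y_N\}$ of the compact support $C$, combined with the observation that each $\mu(B_\veps(y_i))>0$ forces the $i$-th ball to receive a sample point with probability $1-(1-p_i)^n$, and a triangle inequality to pass from the net being hit to $d_H(C,P_n)$ being small. Where you and the paper diverge is only in the boilerplate used to convert the per-$\veps$ estimate into almost sure convergence. You run Borel--Cantelli for each of the finitely many balls, get a random threshold $n_0(\veps)$, and then intersect over the countable sequence $\veps_j=1/j$. The paper instead first notes that $P_n\subset P_{n+1}$ makes $n\mapsto d_H(C,P_n)$ non-increasing, so the almost sure limit exists a priori; a union bound over the same finite net then shows $d_H(C,P_n)\to 0$ in probability, and the two limits must agree. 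The monotonicity trick is a small shortcut that sidesteps both Borel--Cantelli and the intersection over $\veps$'s, but your version is equally elementary and standard. (In fact, since the events $\{B_\veps(y_i)\cap P_n=\emptyset\}$ are decreasing in $n$ with probabilities tending to $0$, you do not even need Borel--Cantelli; continuity of measure from above already gives the random threshold.)

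One minor caveat: your justification of $\mu(C)=1$, and hence $P_n\subset C$ a.s., is a bit loose. Separability of $C$ does not by itself control $\mu$ on $M\setminus C$, and the parenthetical ``replacing $M$ with $C$ since it carries all the mass'' presupposes what you are trying to establish. The paper simply asserts $P_n\subset C$ a.s.\ without comment, so you are in good company; the claim is automatic for Radon measures (in particular whenever $M$ is separable or Polish) and can only fail in pathological non-separable settings. Treat this as a clarifying footnote rather than a gap in your argument.
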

\begin{proof}
As $\supp(X_i) = C$ we have $P_n \subset C$ with probability $1$. Thus, $$d_H(C,P_n)= \inf \left\{\veps >0 \mid C \subset B_\veps(P_n)\right\}.$$ 
 By construction, $P_n \subset P_{n+1}$ almost surely for all $n$ so that
\[
 d_H(C,P_{n+1}) \leq d_H(C,P_n) \textrm{ almost surely},
\]
and $0 \leq \lim_{n\to \infty} d_H(C,P_n)$ exists almost surely due to monotonicity. 
It thus suffices to show that $d_H(C,P_n) \to 0$ in probability. 
Here we use the property that if $Z_n \to Z$ in probability and $Z_n \to Y$ almost surely, then $Z=Y$ almost surely. 
For $\gamma > 0$ let us denote the event
\[
 A^n_\gamma = \left\{ d_H(C,P_n) > \gamma \right\}.
\]
We need to show that $\P(A^n_\gamma) \to[n\to \infty] 0$ for all $\gamma > 0$. 
Let us fix some $\gamma>0$. 
We have
\begin{equation}\label{eq agamman}
 A_\gamma^n = \left\{ C \not\subset (P_n)_\gamma \right\} = \left\{ \exists y \in C : y \not\in (P_n)_\gamma \right\}= \left\{ \exists y \in C : B_\gamma(y) \cap P_n = \emptyset \right\}.
\end{equation}
Since $C$ is compact, it is totally bounded, i.e., for each $\veps > 0$ we can find $c_1,\ldots,c_{N(\veps)} \in C$ 
such that $C \subset \bigcup^{N(\veps)}_{i=1} B_\veps(c_i)$.
For $\veps=\frac{\gamma}{2}$ it must be that
\[
 A_\gamma^n \subset \bigcup_{i=1}^{N\left(\frac{\gamma}{2}\right)} \left\{ B_{\frac{\gamma}{2}}(c_i) \cap P_n = \emptyset\right\} \textrm{ almost surely}
\]
from \eqref{eq agamman}. 
Indeed, if $\xi \in C$ is a random point satisfying $B_\gamma(\xi)\cap P_n=\emptyset$, 
then for $i \leq N\left(\frac{\gamma}{2}\right)$ such that $\xi\in B_{\frac{\gamma}{2}}(c_i)$
 we must have $B_{\frac{\gamma}{2}}(c_i)\cap P_n=\emptyset$  (otherwise we could find a point in $P_n$ at distance smaller than 
$\gamma$ from $\xi$ by the triangle inequality). 
Since the random points $X_j$ are i.i.d., we have for each $i$
\[
 \P\left( \left\{ B_{\frac{\gamma}{2}}(c_i) \cap P_n = \emptyset\right\} \right) = \prod_{j=1}^n \left( 1- \P\left(X_j\in B_{\frac{\gamma}{2}}(c_i)\right)\right) = 
\left( 1- \P(X_1\in B_{\frac{\gamma}{2}}(c_i)\right)^n.
\]
Due to subadditivity of $\P$ we conclude
\[
 \P\left(A_\gamma^n\right) \leq \P\left( \bigcup_{i=1}^{N\left(\frac{\gamma}{2}\right)} \left\{ B_{\frac{\gamma}{2}}(c_i) \cap P_n = \emptyset\right\}\right) \leq
\sum_{i=1}^{N\left(\frac{\gamma}{2}\right) } \left( 1- \P(X_1\in B_{\frac{\gamma}{2}}(c_i))\right)^n.
\]
Each term in the finite sum on the right-hand-side goes to zero as $n\to \infty$, since all the $c_i$ were chosen in the support of the distribution of $X_1$.
Since $\gamma>0$ is arbitrary, the claim follows as noted above. 
\end{proof}
%---------------------------------------------------------------------------------------------------------

It is worthwhile emphasizing that there is no condition on the distribution of the random variables such as absolute continuity, the above result is completely general
 and vaguely reminiscent of the Glivenko-Cantelli theorem.
%--------------------------------------------------------------------------------------------------------- 

%--------------------------------------------------------------------------------------------------------- 
%--------------------------------------------------------------------------------------------------------- 
%--------------------------------------------------------------------------------------------------------- 
\section{LLN and CLT for barcodes}\label{section clt}
%--------------------------------------------------------------------------------------------------------- 
%--------------------------------------------------------------------------------------------------------- 
%---------------------------------------------------------------------------------------------------------

We deduce a law of large numbers (LLN) and a central limit theorem (CLT) for $\BCbot$-valued random variables. 
This becomes meaningful via Theorem \ref{theorem embedding l1} in Section \ref{section embedding barcodes}. 
In the context of \emph{persistence landscapes}, Bubenik \cite{Bub15} observed that LLN and CLT can be deduced from general probability theory in Banach spaces. 
In this section we mirror his approach in the present (barcode representation) context.
For a general reference on probability theory in Banach spaces we refer to the monographs by Vakhania, Tarieladze, and Chobanyan \cite{VTC87} respectively by Ledoux and Talagrand \cite{LT91}.

Let us recall the definition of the {Pettis integral}. Let $(V, \norm{\cdot}_V)$ be a Banach space. Given a probability space $(\Omega,\sF,\P)$ and a random vector $\xi:\Omega\to V$, an element $v\in V$ is called the \emph{Pettis integral} of $\xi$ if for each continuous linear functional $\vphi:V \to \R$ we have
\[
\vphi(v) = \int_\Omega \vphi\left(\xi(\omega)\right) d\P(\omega).
\]
The vector $v$ is also called the \emph{expectation} of $\xi$ and is denoted by $\E[\xi]$ or $\int_\Omega \xi(\omega) d\P(\omega)$. If $\E[\norm{\xi}_V]< \infty$, then $\E[\xi]$ exists and satisfies $\norm{\E[\xi]}_V \leq \E[\norm{\xi}_V]$, see \cite[II.3.1 (c)]{VTC87}.
%Let $X,Y:(\Omega,\sF,\P) \to (\Omega',\sF')$ be random variables, and let $f:(\Omega',\sF') \to (\Omega'',\sF'')$ be a measurable funtion.
%If $X$ and $Y$ are independent, so are $f(X)$ and $f(Y)$. 

\begin{theorem}[LLN for barcodes]\label{theorem lln for barcodes}
Let $T: \BCbot \to V$ be a continuous map from the space of barcodes to a separable Banach space $V$. 
Let $\{X_i\}_{i\in \N}$ be an i.i.d.~sequence of $\BCbot$-valued random barcodes such that $\E[\norm{T(X_1)}]< \infty$. 
Then the sequence of random variables $(S_n)_n$ where
\begin{equation}\label{eq mean}
S_n := \frac{T(X_1) + \ldots + T(X_n)}{n}
\end{equation}
converges almost surely to $\E[T(X_1)]$.
\end{theorem}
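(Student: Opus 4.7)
The plan is to reduce the statement to the classical strong law of large numbers (SLLN) for Banach space valued random variables, commonly attributed to Mourier. The continuity assumption on $T$ together with separability of $V$ does all the work to transfer the i.i.d.\ and integrability conditions from the barcode side to the Banach space side; once that is done, the SLLN applies off the shelf.

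First I would verify the measurability and integrability hypotheses needed to apply the Banach-valued SLLN. Each $X_i$ is a Borel measurable map $\Omega\to \BCbot$, and $T:\BCbot \to V$ is continuous and hence Borel measurable, so the compositions $T(X_i)$ are Borel measurable maps $\Omega \to V$. Since $V$ is separable, the Pettis measurability theorem identifies Borel measurability with strong (Bochner) measurability, so each $T(X_i)$ is strongly measurable. Because the $T(X_i)$ are deterministic Borel functions of the i.i.d.\ sequence $(X_i)_{i\in\N}$, they form an i.i.d.\ sequence of $V$-valued random variables. The assumption $\E[\norm{T(X_1)}]< \infty$ implies that $T(X_1)$ is Bochner integrable, so $\E[T(X_1)]\in V$ exists as a Pettis (in fact Bochner) integral, in the sense recalled in the paragraph preceding the theorem.

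Next I would invoke the SLLN in separable Banach spaces: for an i.i.d.\ sequence $(Y_i)_{i\in\N}$ taking values in a separable Banach space $V$ with $\E[\norm{Y_1}]<\infty$, the empirical means $\tfrac{1}{n}\sum_{i=1}^n Y_i$ converge almost surely in norm to $\E[Y_1]$; see, for instance, \cite{LT91} or \cite{VTC87}. Applying this to $Y_i := T(X_i)$ yields exactly $S_n \to \E[T(X_1)]$ almost surely, which is the desired conclusion.

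I do not anticipate a genuine obstacle: the theorem is essentially a restatement of Mourier's SLLN through the continuous map $T$. The only point that merits some care is the measurability reduction, but this is automatic thanks to separability of $V$ and continuity of $T$. Note also that Lipschitz continuity of $T$ (which is the hypothesis one typically has in view of Proposition \ref{proposition barcodes for compact metric spaces} and the examples in Section \ref{section embedding barcodes}) is not needed here; mere continuity suffices.
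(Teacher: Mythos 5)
Your proof is correct and follows essentially the same route as the paper: observe that $T(X_i)$ is an i.i.d.\ sequence of $V$-valued random variables and invoke the strong law of large numbers in separable Banach spaces (the paper cites \cite[Corollary 7.10]{LT91}). The additional remarks on Pettis/Bochner measurability are a welcome elaboration but do not change the argument.
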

\begin{proof}
As the $\{X_n\}_n$ are i.i.d., so are the random variables $\{T(X_n)\}_n$.~
Thus, the theorem follows from the general theory of Banach space valued probability, see \cite[Corollary 7.10]{LT91}.
\end{proof}

Let us recall the concept of \emph{type} and \emph{cotype} of a Banach space, see e.g. \cite[II.9.2]{LT91}. A \emph{Rademacher} (or \emph{Bernoulli}) \emph{sequence} is a sequence of independent random variables with values $\pm 1$ both taken with probability $1/2$. For $1 \leq p\leq 2$ a Banach space $\left(V,\norm\cdot\right)$ is said to be \emph{of type $p$} if for every Rademacher sequence $(\veps_i)_{i\in \N}$ there exists a constant $C$ such that for all finite sequences $(x_i)$ the inequality
\[
\norm{\sum_{i}\veps_i x_i}_p \leq C\cdot  \left(\sum_i \norm{x_i}^p\right)^{\frac{1}{p}}
\]
holds. Here, $\norm\cdot_p$ is defined as follows:
\[
\norm X_p =(\int_\Omega \, ||X||^p d\P )^{\frac{1}{p}},
\]
where $(\Omega, \sF, \P)$ is the underlying probability space, and the norm $\norm{\cdot}$ is the norm of the Banach space $V$.
 Similarly, $\left(V,\norm\cdot\right)$ is said to be \emph{of cotype $q$} for $1\leq q \leq \infty$ if instead there is a constant $D$ such that
\[
 \left(\sum_i \norm{x_i}^q\right)^{\frac{1}{q}}   \leq  D\cdot \norm{\sum_{i}\veps_i x_i}_q.
\]
By  \cite[Theorem 2.1]{HJP76}, being of type $p$ is equivalent the existence of a constant $C>0$ such that 
\[
\E\left[ \norm{\sum\nolimits_{j=1}^n X_j}^p\right] \leq C \cdot  \sum_{j=1}^n E\left[\norm{X_j}^p\right] 
\] 
for all independent $X_1, \ldots, X_n$ with mean $0$ and finite $p$-th moment.

Note that every Banach space is of type $1$ and that a Hilbert space is of type $2$ and cotype $2$. It can be shown that even the converse is true, i.e. a Banach space of type  $2$ and cotype $2$ is a Hilbert space, see \cite[Theorem 1.1]{Kw}.

\begin{theorem}[CLT for barcodes]\label{theorem clt for barcodes}
Let $T: \BCbot \to V$ be a continuous map from the space of barcodes to a separable Banach space $V$ of type $2$.  Let $\{X_i\}_{i\in \N}$ be an i.i.d.~sequence of $\BCbot$-valued random barcodes such that $\E[{T(X_1)}]=0$ and $\E[\norm{T(X_1)}^2]< \infty$ and let 
$S_n$ be the $V$-valued random variable from \eqref{eq mean}. 
Then $(\sqrt{n} S_n)_n$ converges weakly to a Gaussian random variable with the covariance structure of $T(X_1)$.
\end{theorem}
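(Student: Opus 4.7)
The plan is to push the problem forward along $T$ and then invoke the classical central limit theorem in a Banach space of type 2. Set $Y_i := T(X_i)$ for $i \in \N$. Since $T$ is continuous (hence Borel measurable) and the $X_i$ are i.i.d., the sequence $(Y_i)_{i\in\N}$ is an i.i.d.\ sequence of $V$-valued random variables. The hypotheses translate directly: $\E[Y_1]=0$ (as a Pettis integral in $V$) and $\E\bigl[\norm{Y_1}_V^2\bigr]<\infty$. In this notation $\sqrt n\, S_n = n^{-1/2}\sum_{i=1}^n Y_i$, so the statement we must prove is the weak convergence of these normalized partial sums to a centered Gaussian $V$-valued random variable with the covariance structure of $Y_1=T(X_1)$.

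This is exactly the content of the Banach-space CLT for separable spaces of type 2: by the Hoffmann-J\o{}rgensen--Pisier theorem, in a separable Banach space of type 2 every i.i.d.\ sequence of mean-zero, square-integrable random variables satisfies the CLT, and the limiting law is the centered Gaussian measure on $V$ with the same covariance operator as the summand. This is recorded, for instance, as \cite[Theorem 10.5]{LT91}; with $V$ separable of type 2 and both required moment conditions on $Y_1$ already in hand, we may invoke it directly to conclude the weak convergence of $(\sqrt n\, S_n)_n$.

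The argument is therefore almost entirely structural; the only nontrivial ingredient is the cited CLT in type 2 spaces. The type 2 assumption genuinely enters here and cannot be omitted: in a general separable Banach space, finite second moment alone does not ensure validity of the CLT, and in fact being of type 2 characterizes exactly those spaces in which the scalar-style hypothesis $\E\bigl[\norm{Y_1}_V^2\bigr]<\infty$ suffices. Separability is needed to guarantee that Gaussian measures on $V$ are Radon and to make sense of weak convergence via test functionals. The main (mild) obstacle is just being careful to record how the hypotheses on $T(X_1)$ transfer to those of the cited Banach-space CLT; once that is done, there is nothing further to prove.
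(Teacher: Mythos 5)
Your argument is correct and is essentially the paper's own proof: pass from $X_i$ to the i.i.d.\ $V$-valued variables $Y_i=T(X_i)$, note that the mean-zero and second-moment hypotheses transfer, and invoke the Banach-space CLT for separable type 2 spaces (the Hoffmann-J\o{}rgensen--Pisier theorem). The paper cites \cite[Theorem 3.6]{HJP76} where you cite \cite[Theorem 10.5]{LT91}, but these record the same result, and you correctly flag the role of separability (ensuring Radon-ness) as the paper does.
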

\begin{proof}
Separability of $V$ implies that any probability measure on $V$ is Radon. Thus, the claim follows from~\cite[Theorem 3.6]{HJP76}. 
\end{proof}

We will show next that for important classes of examples the hypotheses of Theorem~\ref{theorem lln for barcodes} and Theorem~\ref{theorem clt for barcodes} are fulfilled. 
Let $M$ be a metric space. For a finite set $P \subset M$ recall that $\beta_k(P)$ is its $k$-th barcode, see Definition \ref{definition barcode map}. For a compact set $K\subset M$, the barcode $\beta_k(K)$ is defined in Proposition \ref{proposition barcodes for compact metric spaces}. 

%---------------------------------------------------------------------------------------------------------
\begin{theorem}\label{theorem hypothesis of lln and clt for compact}
Let $M$ be a metric space and let $X$ be a random variable with values in a compact set $\sK \subset K(M)$. 
Let $T:\BCbot \to V$ be a continuous map to a separable Banach space $V$ of type $2$. 
Then $\norm{T(\beta_k(X))}$ has finite $n$-th moments for all $n\geq 0$ where $\beta_k$ denotes the $k$-th barcode.
\end{theorem}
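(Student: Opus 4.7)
The plan is short and direct, relying entirely on compactness. By Proposition~\ref{proposition barcodes for compact metric spaces} the barcode map $\beta_k : K(M) \to \BCbot$ is $1$-Lipschitz, hence continuous, and by hypothesis $T : \BCbot \to V$ is continuous. Therefore the composition $F := T \circ \beta_k : K(M) \to V$ is continuous. Since $\sK \subset K(M)$ is compact, the image $F(\sK)$ is a compact subset of the Banach space $V$, and in particular bounded: there is a constant $R < \infty$ such that $\norm{F(K)} \leq R$ for every $K \in \sK$.

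Next I would verify measurability of the composition. Since $X$ is a random variable with values in the metric space $\sK$ (hence Borel measurable) and $F$ is continuous (so Borel measurable), the composition $F(X) = T(\beta_k(X))$ is a Borel measurable $V$-valued random variable. Consequently the scalar map $\omega \mapsto \norm{T(\beta_k(X(\omega)))}$ is measurable and the moments in question are well-defined.

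Finally, since $X$ takes values in $\sK$ almost surely, the bound from the first paragraph yields $\norm{T(\beta_k(X))} \leq R$ almost surely, and therefore
\[
\E\bigl[\norm{T(\beta_k(X))}^n\bigr] \leq R^n < \infty
\]
for every $n \geq 0$, as required.

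I do not anticipate any real obstacle here: the hypotheses that $V$ be separable and of type~$2$ are not used in the argument. They appear in the statement because the theorem is formulated precisely so that its conclusion, combined with those additional hypotheses on $V$, allows one to invoke Theorems~\ref{theorem lln for barcodes} and~\ref{theorem clt for barcodes} on the random variable $T(\beta_k(X))$.
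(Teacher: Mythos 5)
Your argument is correct and matches the paper's proof essentially line for line: both exploit the $1$-Lipschitz continuity of $\beta_k$, the continuity of $T$, and the compactness of $\sK$ to produce a uniform bound $R$ on $\norm{T(\beta_k(X))}$, from which the moment bound $\E[\norm{T(\beta_k(X))}^n]\le R^n$ is immediate. Your added remark that separability and type~$2$ of $V$ are unused here (and are present only to feed into the LLN and CLT theorems) is also accurate.
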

\begin{proof}
The map $\beta_k$ is continuous with respect to the bottleneck (in the codomain) and the Hausdorff (in the domain) distances.
Thus, the image 
$$C=\left\{\norm{T(\beta_k(K))}\mid K \in \sK\right\} \subset \Rplus$$ is compact. 
Let $R:=\sup C < \infty$. 
If $\left(\Omega,\sF,\P\right)$ is the underlying  probability space on which $X$ is defined, 
then clearly $\|T(\beta_k(X))\|\leq R$ holds $\P$-almost surely, and in particular
\[
\E[\|T(\beta_k(X))\|^n] \leq R^n \int_{\Omega}d\P=R^n.
\]
\end{proof}
%---------------------------------------------------------------------------------------------------------

The following is our main application.

\begin{corollary}\label{example point cloud}
Let $M=\R^d$ and let $X_1, \ldots, X_n$ be random variables with values in a compact subset $W\subset \R^d$. Then $P_n=\{X_1, \ldots, X_n\}$ is a random variable with values in the compact subset $\sK=K(W) \subset K(\R^d)$. Thus, for every continuous map $T:\BCbot \to V$ as in Theorem \ref{theorem hypothesis of lln and clt for compact} the LLN and CLT (Theorems \ref{theorem lln for barcodes} and \ref{theorem clt for barcodes}) apply to a sequence of i.i.d. copies of $P_n$ for fixed~$n$.
\end{corollary}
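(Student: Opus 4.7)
The plan is to verify the two hypotheses needed to invoke the earlier LLN/CLT machinery, namely that (i) $P_n$ is genuinely a $K(W)$-valued random variable and (ii) its image under $T \circ \beta_k$ has moments of all orders.

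First I would recall that since $W \subset \R^d$ is compact, the space $(K(W),d_H)$ is compact as well; this is the standard Blaschke-type fact already cited in the excerpt just after the definition of the Hausdorff distance. In particular, we may take $\sK := K(W)$ in Theorem \ref{theorem hypothesis of lln and clt for compact}.

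Next I would verify that the ``set-formation'' map
\[
\Phi_n : W^n \to K(W), \quad (x_1,\ldots,x_n) \mapsto \{x_1,\ldots,x_n\},
\]
is continuous with respect to the product metric on $W^n$ and the Hausdorff metric on $K(W)$. The elementary estimate
\[
d_H\bigl(\{x_1,\ldots,x_n\},\{y_1,\ldots,y_n\}\bigr) \leq \max_{1\leq i \leq n} d(x_i,y_i)
\]
(which follows immediately from the definition \eqref{eq hausdorff metric}) gives this at once. Consequently $P_n = \Phi_n(X_1,\ldots,X_n)$ is a bona fide random variable with values in the compact set $\sK = K(W)$, so Theorem \ref{theorem hypothesis of lln and clt for compact} applies and yields $\E[\norm{T(\beta_k(P_n))}^m] < \infty$ for every $m \geq 0$.

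Finally, fix $n$ and let $P_n^{(1)}, P_n^{(2)}, \ldots$ be an i.i.d.\ sequence of copies of $P_n$. Set $Y_i := \beta_k(P_n^{(i)})$. These are i.i.d.\ $\BCbot$-valued random variables with $\E[\norm{T(Y_1)}] < \infty$, so Theorem \ref{theorem lln for barcodes} gives almost sure convergence of the empirical means $S_N = \frac{1}{N}\sum_{i=1}^N T(Y_i)$ to $\E[T(Y_1)]$. For the CLT, we instead consider the centered variables $Y_i' := T(Y_i) - \E[T(Y_1)]$; since $\E[\norm{T(Y_1)}^2] < \infty$ by the moment bound, $V$ is of type $2$, and $V$ is separable, Theorem \ref{theorem clt for barcodes} applies and $\sqrt{N} \cdot \frac{1}{N}\sum_{i=1}^N Y_i'$ converges weakly to a Gaussian with the covariance of $T(Y_1)$.

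No step here is truly delicate; the only mildly nontrivial point is the continuity of $\Phi_n$, and even that reduces to a one-line triangle-inequality estimate. All the real work has been absorbed into the earlier theorems, which is precisely the point of stating the result as a corollary.
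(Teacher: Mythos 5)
Your proof is correct and follows the same route as the paper, which states this corollary without a separate proof precisely because the argument is a direct assembly of Theorem \ref{theorem hypothesis of lln and clt for compact} with Theorems \ref{theorem lln for barcodes} and \ref{theorem clt for barcodes}. The one detail you supply that the paper leaves implicit is the continuity of the set-forming map $\Phi_n:W^n \to K(W)$, which is needed to justify that $P_n$ is a well-defined $K(W)$-valued random variable; this is a worthwhile addition and your one-line estimate $d_H(\{x_i\},\{y_i\}) \leq \max_i d(x_i,y_i)$ settles it.
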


%---------------------------------------------------------------------------------------------------------
%---------------------------------------------------------------------------------------------------------
%---------------------------------------------------------------------------------------------------------
\section{Sampling from the circle: expected barcode lengths}\label{section sphere}
%---------------------------------------------------------------------------------------------------------
%---------------------------------------------------------------------------------------------------------
%---------------------------------------------------------------------------------------------------------
We wish to consider the question of approximation by expectations (of transformations) of random barcodes, where the barcodes are obtained from i.i.d.~samples with a fixed (large) sample size. 

We first compute expectations in the context of i.i.d.~sampling in the simplest example at work - the circle $$\S^1=\{x=(x_1,x_2)\in \R^2 \mid x_1^2 + x_2^2 = 1\}$$ with 
 uniform samples. Recall that the uniform distribution on an $m$-dimensional manifold $M \subset \R^d$ of finite volume is defined by
\[
\P(A):=\frac{\vol(A)}{\vol(M)} \qquad \forall \ A\subset M \textrm{ measurable.}
\]
Here, $\vol$ is the $m$-dimensional volume of measurable subsets of $M$. 

In our study, we will more precisely focus on the length of the $\beta_1$-barcode for the unit circle\footnote{The $\beta_1$-barcode of $\S^1$ is shown to consist of at most one interval in Corollary \ref{corollary bn leq one}, thus we may speak of its length by which we just mean the length of that interval.}, and approach the question more generally in Section \ref{section approximation one}. In order to get these more precise results, we need to be more concrete on the distribution.

Recall that for a finite set $P\subset \S^1$ and $t \geq 0$ we denoted by $P_{t}$ the closed $t$-neighborhood of $P$. 
 Before allowing $P$ to be random, we deduce some general properties of deterministic~$P_{t}$.

\begin{lemma}\label{lemma homology sn}
If $t\in[0,1) $, the projection $\pi: P_{t} \to \S^1$, $v \mapsto \frac{v}{\norm v}$ is a homotopy equivalence onto its image $\pi(P_t) \subset \S^1$.
 If $t \geq 1$, then $P_{t}$ is star-shaped for $0 \in P_{t} \subset \R^{2}$. In particular, $P_{t}$ is contractible in that case.
\end{lemma}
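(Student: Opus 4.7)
The plan is to treat the two cases separately, with the star-shape claim being much the easier of the two.

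For $t\geq 1$, every $p_i \in P$ satisfies $\norm{p_i}=1 \leq t$, so $0 \in \overline{B}_t(p_i) \subset P_t$. Given any $v \in P_t$, pick an index $i$ with $v\in \overline{B}_t(p_i)$; then both $0$ and $v$ lie in the convex Euclidean ball $\overline{B}_t(p_i)$, hence so does the whole segment $\{sv\mid s\in [0,1]\}$. This is the star-shape property at $0$, and contractibility of $P_t$ follows via the straight-line homotopy $(v,s)\mapsto (1-s)v$, which stays in $P_t$ by the same convexity argument.

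For $t\in [0,1)$, the map $\pi$ is well-defined on $P_t$ because $\norm v \geq 1-t>0$ for any $v\in \overline{B}_t(p_i)$. The strategy is to exhibit a deformation retraction of $P_t$ onto the intermediate set $S:=\{v\in \R^2 \mid \norm v = \rho\}\cap P_t$, where $\rho:=\sqrt{1-t^2}$, and then rescale to $\pi(P_t)\subset \S^1$ to recover $\pi$. The key geometric observation is that for every $v\in \overline{B}_t(p_i)$ one also has $\rho v/\norm v \in \overline{B}_t(p_i)$. To check this, expand the trivial inequality $(\norm v -\rho)^2\geq 0$ to obtain $\norm v^2 +1-t^2 \geq 2\rho \norm v$, and combine with the membership condition $2\,v\cdot p_i \geq \norm v^2 + 1 - t^2$ obtained by expanding $\norm{v-p_i}^2\leq t^2$. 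This yields $v\cdot p_i/\norm v \geq \rho$, and a direct substitution gives $\norm{\rho v/\norm v -p_i}^2 = 2-t^2 - 2\rho\, v\cdot p_i/\norm v \leq 2-t^2 - 2\rho^2 = t^2$.

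With this key lemma in hand, the radial homotopy $H(v,s) := ((1-s)\norm v + s\rho)\, v/\norm v$ moves $v$ along the line segment to $\rho v/\norm v$. Both endpoints lie in the convex ball $\overline{B}_t(p_i)$, so the whole segment does, and hence $H(v,s)\in P_t$ for every $s\in[0,1]$; moreover $H$ fixes $S$ pointwise. This is a strong deformation retraction of $P_t$ onto $S$. The rescaling $S \to \pi(P_t)$, $v\mapsto v/\rho$, is a homeomorphism that agrees with $\pi$ on $S$, so $\pi\colon P_t \to \pi(P_t)$ factors as this deformation retraction followed by the rescaling, both of which are homotopy equivalences.

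The main obstacle is identifying the correct intermediate radius. The naive choice $\rho=1$ fails: a tangent point of $\overline{B}_t(p)$ from the origin has $\pi$-image strictly outside $\overline{B}_t(p)$, so the straight-line radial homotopy all the way to $\S^1$ need not stay in $P_t$. The correct radius $\sqrt{1-t^2}$ is precisely the one at which $\rho v/\norm v$ is guaranteed to remain in the Euclidean ball, after which the verification collapses to the elementary inequality $(\norm v-\rho)^2\geq 0$.
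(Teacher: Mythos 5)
Your proof is correct, and it takes a genuinely different route from the paper's. The paper constructs an explicit polygonal chain $G\subset P_t$ (the polygon through the $p_i$ in circular order if $\pi(P_t)=\S^1$, otherwise a chain with two extremal boundary points appended), verifies that $\pi\vert_G$ is a homeomorphism onto $\pi(P_t)$ (which requires separating into the cases $\pi(P_t)=\S^1$ or not, and reducing to connected components first), and deformation retracts $P_t$ onto $G$ by convex combination. Your proof replaces the ad hoc polygon by the uniform choice $S=P_t\cap\{\norm{v}=\sqrt{1-t^2}\}$ and reduces all the work to the single elementary inequality $(\norm{v}-\rho)^2\geq 0$, after which $\rho v/\norm{v}\in\overline{B}_t(p_i)$ whenever $v\in\overline{B}_t(p_i)$, and radial convexity does the rest. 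This buys you several things at once: no case split on connectedness or on whether the projection covers the circle; no need to verify that $\pi$ restricted to the retract is a homeomorphism (for $S$ it is trivially rescaling by $1/\rho$); and a sharper understanding of why the radial retraction works at all — as you rightly note, the naive retraction straight out to $\S^1$ fails in general, a subtlety the paper glosses over when it asserts that "the line segment between $x$ and $\pi(x)$ is in $P_t$" (what they actually need, and what your computation delivers in a clean closed form, is that the radial segment from $x$ to a specific interior point stays inside). The star-shapedness argument for $t\geq 1$ is the same in both proofs.
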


\begin{example}\label{example points on circle}
Before we proceed to the proof of the lemma, let us illustrate what happens in two simple examples.

\begin{figure}[ht]
\begin{minipage}[b]{0.45\textwidth}
\centering
\includegraphics[scale=3.9]{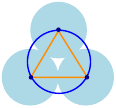}
\caption{Three points whose $t$-neighborhood has a cycle.}\label{figure three points}
\end{minipage}
\hfill
\begin{minipage}[b]{0.45\textwidth}
\centering
\includegraphics[scale=0.6]{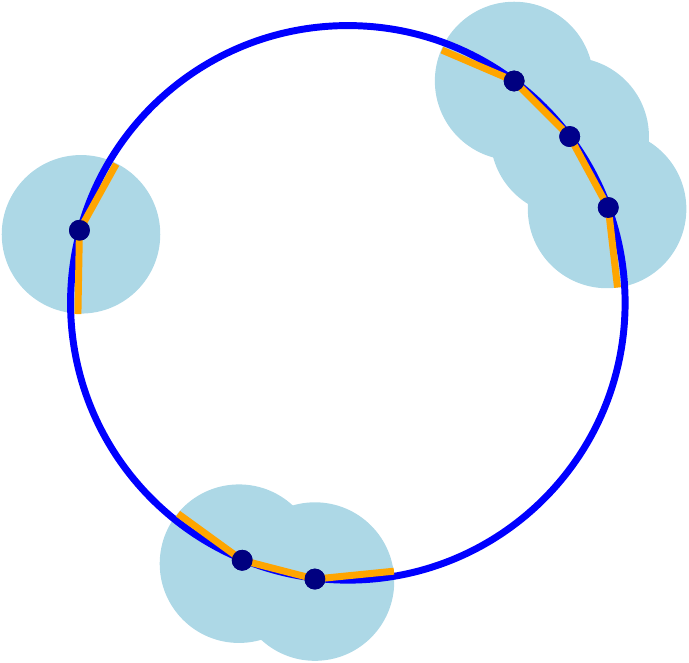}
\caption{Six points whose $t$-neighborhood has no cycle.}\label{figure six points}
\end{minipage}
\end{figure}

The first example is $P=\{x\in\C\mid x^3=1\}$ and $t=\frac{\sqrt{3}}{2}$ as depicted in Figure \ref{figure three points}. Even though $P_t$ contains a nontrivial $1$-cycle (the triangle between the three points), it does not contain $\S^1$. However, its image $\pi(P_t)$ is the full circle and indeed, $P_t$ and $\S^1$ are homotopy equivalent. The homotopy equivalence is realized by exhibiting a subspace of $P_t$ that maps homeomorphically to the sphere, namely the orange triangle. 

The second example is depicted in Figure~\ref{figure six points}. In this case both $P_t$ and $\pi(P_t)$ have three connected components and each of them is contractible. As in the previous example, the homotopy equivalence is shown by noting that the orange polygonal chain  inside $P_t$ maps homeomorphically to $\pi(P_t)$. This chain is obtained by considering each connected component of $P_t$ separately and within such a component connecting every point of $P$ through a straight line segment with its left and right neighbor (if existent) and furthermore connecting the ``leftmost'' and the ``rightmost'' point (call them $x_\ell$ and $x_r$) via a straight line segment to the unique leftmost point on the boundary of the $t$-ball around $x_\ell$ respectively to the unique rightmost point on the boundary of the $t$-ball around $x_r$. 

As explained in Example \ref{example tame} and Section \ref{section metric spaces}, the homotopy type of an ``inflated point cloud'' $P_t \subset \R^d$ can be calculated using the nerve theorem. The homology of $P_t$ is the same as the homology of the \v{C}ech complex $\check{C}_t(P)$ with parameter $t \geq 0$. The \v{C}ech filtration also gives a computational tool to get one's hand on the persistent homology of a finite point cloud. However, it turns out that there is no actual homology computation to be done in this section, because by Corollary \ref{corollary bn leq one} below the persistent homology of a finite point cloud on the circle will be rather simple.
\end{example}

\begin{proof}[Proof of Lemma \ref{lemma homology sn}]
The statement for $t\geq 1$ is clear because every point in $P_t$ is contained in a convex ball containing $0\in P_t$ with center on the circle. We will therefore assume that $t < 1$ from now on. Let us construct a homotopy inverse to $\pi$. 

As was anticipated in the examples, the homotopy equivalence will be obtained by exhibiting a subspace $G \subset P_t$ which under $\pi$ maps homeomorphically onto $\pi(P_t)$. The homotopy inverse to $\pi$ will then be $\iota:=\left(\pi\vert_G\right)^{-1}:\pi(P_t) \to G \subset P_t$ to the effect that $\pi\circ \iota = \id_{\pi(P_t)}$ and $\iota\circ \pi$ will be homotopic to the identity on $P_t$ via the homotopy $(x,t) \mapsto tx + (1-t)\iota(\pi(x))$. Observe that with a point $x$ also the line segment between $x$ and $\pi(x)$ is in $P_t$ so that the homotopy is well-defined.

First note that every connected component of $P_t$ is closed and maps onto a closed interval $I\subset \S^1$ where a closed interval on the circle is just the image of a closed interval in $\R$ under the parametrization $t\mapsto \left(\cos(t),\sin(t)\right)$. Thus, it is sufficient to treat each connected component separately. Moreover, connected components of $P_t$ are again of the form $P'_t$ for a subset $P'\subset P$ because balls $\bar B_t(x)$ are connected. In other words, we may assume that $P_t$ is connected.

If $\pi(P_t)=\S^1$, we put $n=\# P$ and let $G$ be the $n$-gon connecting the centers of the circles in circular order by line segments. This is the triangle in the first example from Example \ref{example points on circle} above.

Suppose now that $\pi(P_t) \not = \S^1$. Without loss of generality $1$ is not in the image of $\pi$. We write $P=\{p_1,\ldots,p_n\}$ such that $\arg(p_i) < \arg(p_{i+1})$ for all $i=1,\ldots,n-1$ where for all $z\neq 1$ we denote $\arg(z)\in(0,2\pi)$ the unique point such that $e^{i \arg(z)}=z$. Moreover, there are unique points $p_0 \in \bar B_t(p_1)$ and $p_{n+1}\in \bar B_t(p_n)$ such that 
\[
\arg(p_0)=\min\{\arg(z) \mid z \in \pi(P_t)\} \quad \textrm{and} \quad \arg(p_{n+1})=\max\{\arg(z) \mid z \in \pi(P_t)\}.
\]
Then, we define $G$ to be the polygonal chain which is the union of the line segments connecting $p_i$ and $p_{i+1}$ for all $i=0,\ldots,n$. We leave it to the reader to verify that $\pi\vert_G$ is a homeomorphism onto $\pi(P_t)$.
\end{proof}

\begin{corollary}\label{corollary bn leq one}
For every $t \in [0,1)$ and $P\subset \S^1$ finite we have 
\[
H_1(P_{t}, \mathbf k)=0 \textrm{ or } H_1(P_{t}, \mathbf k)= \mathbf k.
\]
\end{corollary}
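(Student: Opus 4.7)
The plan is to deduce this directly from Lemma \ref{lemma homology sn}. For $t \in [0,1)$, the projection $\pi\colon P_t \to \S^1$ is a homotopy equivalence onto its image $\pi(P_t) \subset \S^1$, so it suffices to show that $H_1(\pi(P_t), \mathbf k)$ is either $0$ or $\mathbf k$. This reduces the topological question to understanding the possible shapes of $\pi(P_t)$.

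First I would observe that $\pi(P_t)$ is a closed subset of $\S^1$, and since $P$ is finite, $P_t$ has only finitely many connected components; each such component is the $t$-neighborhood of a subset of $P$ whose consecutive arguments are close enough to chain together. As noted inside the proof of Lemma \ref{lemma homology sn}, each connected component of $P_t$ maps under $\pi$ to a closed arc of $\S^1$. Hence $\pi(P_t)$ is a finite union of closed arcs.

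Now I would split into two cases. If $\pi(P_t) = \S^1$, then $P_t$ is homotopy equivalent to $\S^1$ and $H_1(P_t, \mathbf k) = \mathbf k$. If instead $\pi(P_t) \subsetneq \S^1$, then removing any point not in the image shows that $\pi(P_t)$ embeds as a closed subset of the real line; being a finite disjoint union of closed arcs, each of its connected components is contractible. Therefore $H_1(\pi(P_t), \mathbf k) = 0$, and by the homotopy equivalence from Lemma \ref{lemma homology sn} we conclude $H_1(P_t, \mathbf k) = 0$.

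There is essentially no hard step: the only point requiring a moment of care is that the components of $\pi(P_t)$ are genuinely arcs (i.e.\ connected subintervals), but this was already established during the proof of Lemma \ref{lemma homology sn}, so the corollary is truly a one-line consequence once that lemma is in place.
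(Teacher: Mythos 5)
Your proof is correct and follows essentially the same route as the paper's: reduce via the homotopy equivalence from Lemma \ref{lemma homology sn} to the image $\pi(P_t)$, and observe that this image is either all of $\S^1$ or a finite disjoint union of closed arcs, yielding $H_1 = \mathbf k$ or $H_1 = 0$ respectively.
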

\begin{proof}
By Lemma \ref{lemma homology sn} (whose notation we use) it suffices to show that $H_1(\pi(P_t),\mathbf k)=0$ or $H_1(\pi(P_t),\mathbf k)=\mathbf k.$
We have seen in the proof of the preceding lemma that the connected components of $\pi(P_t)$ are either all homeomorphic to closed intervals in $\R$ or $\pi(P_t)=\S^1$, whence the two cases.
\end{proof}

As usual, we denote by $\beta_k(P)$ the barcode obtained from the $k$-th persistent homology of a finite set $P \subset \R^d$. 
By Corollary \ref{corollary bn leq one} we know that the $\beta_1$-barcode of a point cloud $P \subset \S^1$ consists of at most one interval. 
We denote the \textbf{length} of this interval by $$\ell(\beta_1(P)) \in [0,1]$$ and also sometimes refer to it as the \textbf{length of the barcode}. Before stating the main result of this section, Theorem \ref{theorem expectation circle}, in its most general form, it might be instructive to consider the following special case.

\begin{proposition}\label{theorem expectation circle 3points}
Suppose that $P_3=\{X_1,X_2, X_3\} \subset \S^1$ is composed of three independent uniformly distributed points on the circle $\S^1$. Then
\[
\E[\ell(\beta_1(P_3))] = \frac{9(\sqrt{3} - 2)}{\pi^2} + 1/4.
\]
\end{proposition}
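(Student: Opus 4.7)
The plan is to reduce $\E[\ell(\beta_1(P_3))]$ to an explicit integral over the distribution of the arc spacings of the three sampled points, and then to evaluate it.

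\emph{Step 1: extract the bar from the triangle geometry.} By Corollary \ref{corollary bn leq one}, $\beta_1(P_3)$ consists of at most one interval. The nerve theorem identifies the homotopy type of $(P_3)_t$ with that of the \v{C}ech complex $\check C_t(P_3)$, so the unique bar is born when the third (longest) edge appears, at $t=c/2$ where $c$ is the longest chord, and it dies when the three closed $t$-balls around $X_1,X_2,X_3$ first share a common point. A short planar computation using the midpoint of the longest side shows that for an obtuse or right triangle the $2$-simplex is already present at $t=c/2$, so the bar has length $0$; for an acute triangle the common-intersection time equals the circumradius. Since the three points lie on $\S^1$, the circumradius equals $1$ in the acute case, giving
\[
\ell(\beta_1(P_3)) = (1-c/2)\,\mathbf{1}_{\{\text{triangle is acute}\}}.
\]

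\emph{Step 2: parametrize by arcs.} Let $(a_1,a_2,a_3)$ be the counterclockwise arc lengths between consecutive points, so $a_1+a_2+a_3=2\pi$. By the inscribed angle theorem, the interior angles of the triangle are $a_i/2$, so acuteness is equivalent to $\max_i a_i<\pi$; and the longest chord equals $2\sin(M/2)$ with $M:=\max_i a_i$ (a brief monotonicity check shows this holds whether or not $M<\pi$). Hence
\[
\ell(\beta_1(P_3)) = \bigl(1-\sin(M/2)\bigr)\,\mathbf{1}_{\{M<\pi\}}.
\]

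\emph{Step 3: distribution of $M$ and the final integral.} By rotational invariance and the classical fact that the spacings of $n$ i.i.d.\ uniform points on a circle of circumference $L$ are uniform on $\{a_i\ge 0,\ \sum a_i=L\}$, the vector $(a_1,a_2,a_3)$ is uniform on the simplex $\{a_i\ge 0,\ \sum a_i=2\pi\}$. Inclusion--exclusion gives, for $t\in[2\pi/3,\pi]$,
\[
\P(M>t) = 3\Bigl(1-\tfrac{t}{2\pi}\Bigr)^{2} - 3\Bigl(1-\tfrac{t}{\pi}\Bigr)^{2},
\]
whose derivative yields the density $f_M(t)=(9t-6\pi)/(2\pi^2)$ on that range. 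The expectation is then
\[
\E[\ell(\beta_1(P_3))] = \int_{2\pi/3}^{\pi}\bigl(1-\sin(t/2)\bigr)\,\tfrac{9t-6\pi}{2\pi^2}\,dt,
\]
and after substituting $u=t/2$ this reduces to $\tfrac{6}{\pi^2}\int_{\pi/3}^{\pi/2}(1-\sin u)(3u-\pi)\,du$, which splits into elementary pieces (one integration by parts handles $\int u\sin u\,du$). The arithmetic collapses to $\tfrac{1}{4}+\tfrac{9(\sqrt{3}-2)}{\pi^2}$.

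The main conceptual point is Step 1 -- correctly identifying the death of the bar as the circumradius in the acute case and as $c/2$ in the obtuse/right case; once the formula $\ell=(1-\sin(M/2))\mathbf{1}_{\{M<\pi\}}$ is in hand, the rest is a careful but routine calculation.
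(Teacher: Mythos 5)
Your proof is correct, but it takes a genuinely different route from the paper's proof of Proposition~\ref{theorem expectation circle 3points}. The paper fixes one of the three points at angle $\pi$, lets the other two be free angles $\vth,\vphi$, and evaluates a two-dimensional integral over the region where the barcode is nontrivial, using symmetry to cut the domain down to a single subregion and reading off the birth time from \eqref{eq time of birth}. You instead pass immediately to the arc-spacing coordinates: you identify the birth/death times via the \v{C}ech/smallest-enclosing-circle picture rather than the paper's projection argument (Lemma~\ref{lemma homology sn}), use the inscribed-angle theorem to translate obtuseness of the triangle into $M=\max_i a_i\ge\pi$ (equivalent to the paper's ``half-circle'' condition), derive the density of $M$ by inclusion--exclusion on the uniform simplex, and finish with a one-dimensional integral. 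What this buys you is a cleaner reduction (a $1$D integral rather than a $2$D one) and, more importantly, a method that scales verbatim to $n$ points; indeed your argument is essentially the $n=3$ specialization of the paper's general Theorem~\ref{theorem expectation circle}, which does use exactly the maximal-spacing distribution and Whitworth's formula. So you have, in effect, re-derived the $n=3$ proposition as a corollary of the approach the paper reserves for general $n$. The details check out: the obtuse-triangle observation that the median to the longest side has length at most $c/2$ (so the $2$-simplex already appears at $t=c/2$) is correct; the identification of the acute-case death time with the circumradius, which is $1$ since the triangle is inscribed in $\S^1$, matches the paper's $t_d=1$; the monotonicity claim that the longest chord is $2\sin(M/2)$ regardless of whether $M<\pi$ holds; the spacing density $f_M(t)=(9t-6\pi)/(2\pi^2)$ on $[2\pi/3,\pi]$ is right; and the final integral $\tfrac{6}{\pi^2}\int_{\pi/3}^{\pi/2}(1-\sin u)(3u-\pi)\,du$ indeed evaluates to $\tfrac{1}{4}+\tfrac{9(\sqrt3-2)}{\pi^2}$.
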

\begin{proof}
We parametrize the circle by the interval $I=(-\pi,\pi]$. 
Using the rotational symmetry we may assume that $X_1=\pi$ and that $X_2=\vth$, $X_3=\vphi$ where $\vth, \vphi$ are uniformly distributed random angles. It follows from Lemma \ref{lemma homology sn} that the time of death of the $\beta_1$-barcode is $t_d=1$. Its time of birth is
\begin{equation}\label{eq time of birth}
t_b=\begin{cases} 1 &\textrm{if } X_1,X_2,X_3 \textrm{ lie on a half circle}\\ \max\left( \frac{\abs{X_1-X_2}}{2},\frac{\abs{X_1-X_3}}{2},\frac{\abs{X_2-X_3}}{2}\right)\end{cases}
\end{equation}
where $\abs{\cdot}$ denotes the Euclidean norm. 
 We have
\[
\begin{aligned}
\abs{X_1-X_2}&=\sqrt{\left(1+\cos(\vth)\right)^2 + \sin(\vth)^2} = 2\cos\left(\frac{\vth}{2}\right),\\
\abs{X_1-X_3}&= 2\cos\left(\frac{\vphi}{2}\right), \\
\abs{X_2-X_3}&= 2\sin\left(\frac{\vth-\vphi}{2}\right).
\end{aligned}
\]
Now we wish to calculate
\[
\E[\ell]=\int_{I \times I} \left( t_d-t_b \right) d\P
\]
where $\P=\frac{1}{4\pi^2} \mu$ is the uniform measure on $I \times I$ and $\mu$ is the Lebesgue measure. We observe that $\ell=t_d-t_b=0$ whenever $X_1,X_2,X_3$ lie on a half circle in $\S^1$ by \eqref{eq time of birth}. 
Let $G \subset I \times I$ be the event that  $X_1,X_2,X_3$ do not lie on a half circle. We have 
\[
G=G_0 \cup \left( -G_0\right) \quad \textrm{where } G_0=\{(\vth,\vphi)\in I \times I \mid \vth \geq 0, \vth - \pi < \vphi < 0 \}.
\]
This event, as well as the function $\ell$, are invariant under $(\vth,\vphi)\mapsto (-\vth,-\vphi)$. Thus
\[
\begin{aligned}
\E[\ell]&= 2 \int_{G_0} \left(1 - t_b\right)d\P\\
&=\frac{1}{2\pi^2} \int_0^\pi \int_{\vth - \pi}^0 \left(1 - t_b(\vth,\vphi)\right)d\vphi d\vth.
\end{aligned}
\]
Next we divide $G_0=G_{12} \cup G_{13} \cup G_{23}$ into three subevents corresponding to whether $\abs{X_1-X_2}$, $\abs{X_1-X_3}$, or $\abs{X_2-X_3}$ is maximal.
 For example, $\abs{X_1-X_2}$ is maximal on $G_{12}=\{(\vth,\vphi)\mid 0< \vth < \frac{\pi}{3}, -\pi+2\vth < \vphi < -\vth\}$.
 Again by symmetry considerations, these events have the same probabilities, and the integrals (expectations) restricted to them have equal values, so that 
\[
\begin{aligned}
\E[\ell]&= \frac{3}{2\pi^2}\int_{G_{12}} \left(1 - t_b\right)d\mu\\
&=\frac{3}{2\pi^2} \int_0^{\frac{\pi}{3}} \int_{2\vth - \pi}^{-\vth} \left(1 - \cos\left( \frac{\vth}{2}\right)\right)d\vphi d\vth\\
&=\frac{9(\sqrt{3} - 2)}{\pi^2} + \frac 1 4
\end{aligned}
\]
as claimed.
\end{proof}

We note $\frac{9(\sqrt{3} - 2)}{\pi^2} + \frac 1 4 \approx 0,00565963600183$.
 The just made calculation can be generalized as follows.

\begin{theorem}\label{theorem expectation circle}
Suppose that $P_n=\{X_1,\ldots,X_n \} \subset \S^1$ is a random point set on the circle, i.e., $X_1, \ldots, X_n$ are independent, uniformly distributed $\S^1$--valued random variables. Then 
\[
\E[\ell\left(\beta_1(P_n)\right)] =1-\left(\sum_{k\geq 1}(-1)^{k-1}\binom{n}{k} \int_0^{\min\left(\frac{1}{2},\frac{1}{k}\right)} \pi \cos(\pi t) (1-kt)^{n-1} \,dt\right).
\]
\end{theorem}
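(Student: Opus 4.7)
The strategy is to reduce $\E[\ell(\beta_1(P_n))]$ to a single integral against the distribution of the largest angular spacing between consecutive points, then apply the classical formula for uniform spacings on the circle.

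By Corollary \ref{corollary bn leq one} we need only track one bar. Lemma \ref{lemma homology sn} identifies its endpoints: for $t\geq 1$ the set $P_t$ is star-shaped (hence contractible), giving the death time $t_d=1$; and for $t<1$ the map $\pi$ is a homotopy equivalence onto its image, so $H_1(P_t)\neq 0$ precisely when $\pi(P_t)=\S^1$. An elementary tangent-line computation (the closed ball $\bar B_t(X_i)$ has center at distance $1$ from the origin and radius $t<1$) shows that $\pi(\bar B_t(X_i))\subset \S^1$ is the arc of angular half-width $\arcsin(t)$ centered at $X_i$. Letting $D_1,\ldots,D_n$ denote the angular gaps between consecutive points on $\S^1$ and $D_{\max}=\max_iD_i$, the coverage condition $\pi(P_t)=\S^1$ becomes $2\arcsin(t)\geq D_{\max}$, i.e.\ $t\geq\sin(D_{\max}/2)$. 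Therefore $t_b=\sin(D_{\max}/2)$ and
\[
\ell(\beta_1(P_n)) = \bigl(1-\sin(D_{\max}/2)\bigr)\,\one{\{D_{\max}<\pi\}}.
\]

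Normalizing by $L_i:=D_i/(2\pi)$ and $L_{\max}:=D_{\max}/(2\pi)$, one verifies the elementary identity
\[
\bigl(1-\sin(\pi L_{\max})\bigr)\,\one{\{L_{\max}<1/2\}} \;=\; \int_0^{1/2}\pi\cos(\pi t)\,\one{\{L_{\max}<t\}}\,dt
\]
by separating the cases $L_{\max}<1/2$ and $L_{\max}\geq 1/2$. Fubini then gives
\[
\E[\ell(\beta_1(P_n))] = \int_0^{1/2}\pi\cos(\pi t)\,\P(L_{\max}<t)\,dt.
\]

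To evaluate the distribution factor, we invoke the classical fact that the spacings $(L_1,\ldots,L_n)$ of $n$ i.i.d.\ uniform points on $\S^1$ are uniformly distributed on the simplex $\{L_i\geq 0,\ \sum L_i=1\}$; consequently $\P(L_i>t\text{ for all }i\in S)=(1-|S|t)_+^{n-1}$ for every $S\subset\{1,\ldots,n\}$, and inclusion-exclusion yields
\[
\P(L_{\max}>t) = \sum_{k=1}^n (-1)^{k-1}\binom{n}{k}(1-kt)_+^{n-1}.
\]
Substituting $\P(L_{\max}<t)=1-\P(L_{\max}>t)$, using $\int_0^{1/2}\pi\cos(\pi t)\,dt=1$, and replacing $(1-kt)_+^{n-1}$ by $(1-kt)^{n-1}$ on $[0,1/k]$ (and $0$ beyond) produces the upper integration limit $\min(1/2,1/k)$ and the claimed formula.

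The only genuine technical step is the identification $t_b=\sin(D_{\max}/2)$: it requires the tangent-line geometry for $\pi(\bar B_t(X_i))$ and the observation that covering $\S^1$ by the projected arcs is exactly the largest-gap condition. Once this is settled, the remainder is a Fubini exchange combined with the classical Dirichlet spacings formula.
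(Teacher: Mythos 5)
Your proof is correct and takes essentially the same route as the paper's: reduce to the largest normalized spacing $M_n$ via Lemma \ref{lemma homology sn} (you derive $t_b=\sin(D_{\max}/2)$ through the projected-arc picture, which is equivalent to the paper's criterion $t_b=\max_i |X_{(i)}-X_{(i-1)}|/2$ since $|X_{(i)}-X_{(i-1)}|=2\sin(D_i/2)$), then apply Whitworth's inclusion--exclusion formula for $\P(M_n\geq t)$ and a Fubini exchange to turn $\E\bigl[(1-\sin(\pi M_n))\one{\{M_n<1/2\}}\bigr]$ into the stated integral. The paper packages the Fubini step as $\E[g(M_n)]=\int g'(t)\,\P(M_n\geq t)\,dt$ for a $C^1$ cutoff $g$, whereas you write the integrand as $\int_0^{1/2}\pi\cos(\pi t)\one{\{L_{\max}<t\}}\,dt$ directly, but the two are the same computation.
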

\begin{proof}
This time we parametrize the circle by the interval $[0,2\pi]$, modulo $2\pi$. 
Let  $\Theta_i$ with values in $(0,1]$ be  specified through the identity $X_i=(\cos(2\pi\Theta_i),\sin(2\pi\Theta_i))=\exp\{2\pi i\, \Theta_i\}$.
It is again natural to identify one  of the points  (for example the last one) with the angle $0=2\pi$.
Let $\Theta_{(i)}$ be the $i$-th order statistic of $(\Theta_1,\ldots,\Theta_{n-1})$, i.e. the $i$-th smallest value among $(\Theta_1,\ldots,\Theta_{n-1})$, and let us set in addition $\Theta_{(0)}:=0$ and $\Theta_{(n)}: =1$. 
The  \emph{normalized (angular) spacings} between the points are defined as follows: $S_i:=\Theta_{(i)}-\Theta_{(i-1)}$ for $i=1,\ldots,n$. 
We also define 
$$
X_{(i)}:=\exp\{2\pi i \,\Theta_{(i)}\}, \ \ i=0,1,\ldots,n,
$$ 
so that the 2-dimensional random points are ordered via their respective angles (similarly to the proof of Lemma \ref{lemma homology sn}).

It is easy to check by induction (or alternatively look in \cite{BK07} or \cite{Dev81})
that the joint distribution of the spacings vector $(S_1,\ldots,S_n)$ is uniform on the unit $n-1$-simplex, as given by,
$$
\P(S_1>a_1,S_2>a_2,\ldots, S_n>a_n)=
\begin{cases}
(1-\sum_j a_j)^{n-1}, & \sum_j a_j<1\\
0, & \sum_j a_j\geq 1
\end{cases}
$$
As in the case of three random points above,  from Lemma \ref{lemma homology sn} we know that the $\beta_1$-barcode dies at time $t_d=1$ and is born at time
\begin{equation}\label{eq birth n points}
t_b=\begin{cases} 1, &\textrm{if } X_1,X_2,\ldots,X_n \textrm{ lie on a half circle}\\ \max_{i=1}^n |X_{(i)}-X_{(i-1)}|/{2} & \textrm{otherwise}\end{cases}.
\end{equation}
The first condition in \eqref{eq birth n points} is equivalent to the \emph{maximal spacing} $M_n:=\max_{i=1}^n S_i$ being $\geq 1/2$. 
However on $\{M_n<1/2\}$ we have
$$
\max_{i=1}^n \frac{|X_{(i)}-X_{(i-1)}|}{2} = \sin(\pi M_n).
$$
For the remainder of the calculation let us abbreviate $\ell\left(\beta_1(P_n)\right)$  by $\ell$.
Due to the just made observations we conclude that $\E[\ell] =\E\left[(1-\sin(\pi M_n))\one{\{M_n<1/2\}}\right]$. 
From the above given expression for the joint residual distribution of spacings and the inclusion-exclusion formula, one deduces the following expression for the residual distribution of $M_n$:
$$
\P(M_n>x)=\P(M_n\geq x)=\sum_{k\geq 1: \, kx <1 } (-1)^{k-1} \binom{n}{k} (1-kx)^{n-1}.
$$
This formula is attributed to Whitworth \cite{Whi97}.
Let us define $g: [0,1] \mapsto [0,1]$ as
$$
g(t):=\begin{cases} \sin{(\pi x)}, & x<1/2\\
1, & x\geq 1/2.
\end{cases}
$$
Now $\E[\ell] = 1- \E\left[g(\pi M_n)\right]$. 
Since $g$ is non-negative and differentiable (of class $C^1$), we can apply a well-known change of (order of) integration formula
$$
\E\left[g(\pi M_n)\right] = \int_{t\geq 0} g'(t) \P(M_n\geq t) \,dt =\int_0^{\frac{1}{2}} \pi \cos(\pi t) \P(M_n\geq t)\, dt ,
$$
which equals 
$$
\sum_{k\geq 1}(-1)^{k-1}\binom{n}{k} \int_0^{\min\left(\frac{1}{2},\frac{1}{k}\right)} \pi \cos(\pi t) (1-kt)^{n-1} \,dt.
$$
\end{proof}

\begin{remark}[Related work]
Similar computations to ours were made in Bubenik and Kim \cite{BK07}
in the setting of Vietoris-Rips filtration (as opposed to \v{C}ech filtration), 
and with respect to the angular (unlike Euclidean taken here) metric on points. 
\end{remark}
%---------------------------------------------------------------------------------------------------------
%---------------------------------------------------------------------------------------------------------
%--------------------------------------------------------------------------------------------------------
\section{Approximation by expected transformations of random barcodes}\label{section approximation one}
%---------------------------------------------- --------------------------------------------------------
%-------------------------------------------------------------------------------------------------------
%---------------------------------------------------------------------------------------------------------
The calculations made in the previous section demonstrate that expected functionals of barcodes can be quite difficult (and, for more complicated examples, impossible) to obtain explicitly.
Theorem \ref{theorem almost sure limit of number of points} applied to $\S^1$ on the other hand tells us that as $n$ gets large, in the notation of the previous section, the length $\ell(\beta_1(P_n))$ of the single bar 
comprising $\beta_1(\S^1)$ must converge to $1$.
If interested in the asymptotics of $\ell(\beta_1(P_n))$ and $\E\left[\ell(\beta_1(P_n))\right]$, we refer the reader to Devroye~\cite{Dev81}. 
In particular, since $\sin(x)\sim x$ for small $x$, one can apply \cite{Dev81}, Lemma 2.5 saying $\frac{n M_n}{\log{n}} \to 1$ in probability, whereas in the last section $M_n$ denotes the maximal spacing. Therefore, $M_n\to 0$ 
almost surely, and $1-\ell(\beta_1(P_n))$ is of order $\frac{\log{n}}{n}$ with an overwhelming probability as $n\to \infty$.
Similar considerations based on \cite{Dev81}, Lemma 2.6 lead to $\E\left[\ell(\beta_1(P_n))\right]=1-\Theta(\frac{\log{n}}{n})$ as $n\to \infty$.

This is an interesting example that motivates the study of the quality of such an approximation in general.

Similarly to Section \ref{section approximation two}, one could consider, for a fixed (and relatively large) $n\in \N$, i.i.d.~$\R^d$-valued random variables $X_1,\ldots, X_n$, where the joint distribution has support on some compact subset $M \subset \R^d$.
The $k$-th barcode of the resulting random finite set $P_n=\{X_1,\ldots,X_n\}$ yields a random barcode $\beta_k(P_n)$ for each $k$. 
Suppose that $T:\BC \to V$ is a continuous function from the barcode space to some Banach space.
By Theorem \ref{theorem lln for barcodes} and Theorem \ref{theorem hypothesis of lln and clt for compact}, the expected value $\E[T(\beta_k(P_n))]$, can be well approximated by the
 empirical means (taken over many i.i.d.~samples of point clouds of size $n$).

We restrict our hypotheses somewhat with respect to those of Section \ref{section approximation two}, in assuming in addition that $M$ is a compact $m$-dimensional 
manifold in $\R^d$, 
and the distribution of $X_1$ above is uniform on $M$. 
We are working on relaxing these hypotheses in a forthcoming project. 
 Let us first introduce some notation. 
Recall that the \emph{medial axis} of $M$ is defined as the closure of the set of points in $\R^d$ that do not have 
a unique nearest point on $M$. 
We denote by $\tau=\inf_{p\in M} \sigma(p) $ the infimum of the distances $\sigma(p)$ of $p\in M$ from the medial axis of $M$, i.e., every point in the open $\tau$-neighborhood has a unique nearest point on $M$. 
It follows from compactness that $\tau$ is positive. The quantity $\tau$ is referred to as 
the \emph{reach} of $M$. 
 
Under the above assumptions,  we can rely on the work by Niyogi et al.~\cite{NSW08}.
The result \cite{NSW08}, Theorem 3.1 is not sufficient for our purposes, therefore we prove a stronger statement
in Theorem \ref{theorem NSW reinforced} and explain how this also follows from the analysis in \cite{NSW08}, see also Remark \ref{remark nsw reinforced}.
Let
\begin{equation}\label{eq nsw constants}
\begin{aligned}
c_1 (\eps)&:=\frac{\vol(M) } {\cos\left({\arcsin{\left(\frac{\eps}{8\tau}\right)} }\right)^m \vol\left(B^m_{\eps/4}(0)\right)} ,  \\ \\
c_2 (\eps)&:=\frac{\vol(M) } {\cos\left({\arcsin{\left(\frac{\eps}{16\tau}\right)}}\right)^m \vol\left(B^m_{\eps/8}(0)\right)},
\end{aligned}
\end{equation} 
where the superscript $m$ indicates that the balls of radii $\eps/4$ and $\eps/8$, respectively, are taken in $\R^m$ (and not necessarily in the ambient space $\R^d$). 
In particular, the smaller the $\eps$, the larger are $c_{1, 2}$, and they are of order $1/\eps^m$. We will use these constants throughout this section.

Let $A\subset \R^d$ be a set and $t\geq 0$. As in Section \ref{section barcodes} we denote by $A_{t}$ the closed $t$-neighborhood of $A$. For every manifold $M$ with reach $\tau$ as above and for every $0\leq t < \tau$ the inclusion $\iota_t: M\into M_{t}$ is a homotopy equivalence. This is almost by definition of the reach: a homotopy inverse is given by the projection $\pi: M_{t}\to M$ to the nearest point on $M$. Note that for any $p\in M_{t}$ the line segment connecting $p$ to $\pi(p)$ is entirely contained in $M_{t}$ (even in the fiber of $\pi$ over $\pi(p)$) so that a simple convex combination between $\iota \circ \pi$ and the identity gives a homotopy equivalence. For $A\subset M$ and $t \in [0,\tau)$ we denote
\begin{equation}\label{eq homotopy equivalence}
\chi_{A,t} : A_{t} \into M_{t} \to[\pi] M
\end{equation}
the composition of the inclusion with the projection.

%---------------------------------------------------------------------------------------------------------
\begin{theorem}\label{theorem NSW reinforced}
Let $M \subset \R^d$ be a smooth compact submanifold of dimension $m$ and let $X_1,X_2,\ldots,X_n$ be an i.i.d. 
random sample from $M$ for the uniform distribution. Denoting $P_n:=\{X_1,\ldots,X_n\}$ we have that
if $\veps \in (0,\sqrt{\frac{3}{5}}\tau)$, then for each $\delta>0$ and each
\begin{equation}\label{eq nsw n}
n>c_1(\veps)\left(\log(c_2 (\veps))+\log{\dfrac{1}{\delta}}\right),
\end{equation}
 the map $\chi_{P_n,t} : (P_n)_{t}\to M$ from \eqref{eq homotopy equivalence} is a homotopy equivalence for all $t \in \left[\veps,\sqrt{\frac{3}{5}}\tau\right)$ with probability at least $1-\delta$. 
\end{theorem}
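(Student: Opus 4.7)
The plan is to decouple the probabilistic estimate from the deterministic geometric content. On the probabilistic side I would show that with probability at least $1-\delta$, the random sample $P_n$ forms a dense enough net in $M$; on the geometric side I would show that \emph{any} sufficiently dense deterministic sample yields the desired homotopy equivalence $\chi_{P_n,t}$ for \emph{every} $t$ in the interval $[\veps,\sqrt{3/5}\tau)$ at once. The separation is what makes the uniformity in $t$ essentially free of cost.

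For the covering step, I would choose a minimal $\veps/8$-net $\{y_1,\ldots,y_N\}\subset M$. A standard packing argument on the manifold, using the geodesic-to-Euclidean comparison controlled by the reach $\tau$ (this is where the factor $\cos(\arcsin(\veps/(16\tau)))^m$ enters and produces $c_2(\veps)$), gives $N\leq c_2(\veps)$. Similarly, for every $y_i$ the uniform measure of the intrinsic ball of radius $\veps/8$ around $y_i$ is bounded below by $1/c_1(\veps)$. Defining the event $A_i=\{P_n\cap B_{\veps/8}(y_i)=\emptyset\}$ and using independence,
\[
\P(A_i)\leq \bigl(1-1/c_1(\veps)\bigr)^n\leq e^{-n/c_1(\veps)}.
\]
A union bound combined with \eqref{eq nsw n} gives $\P(\bigcup_i A_i)\leq \delta$. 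On the complementary event $E$, for every $x\in M$ we pick $y_i$ with $d(x,y_i)\leq \veps/8$ and a point of $P_n$ within $\veps/8$ of $y_i$, hence $d(x,P_n)\leq \veps/4$, i.e.\ $P_n$ is an $\veps/4$-net in $M$.

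Now I work deterministically on the event $E$ and fix $t\in[\veps,\sqrt{3/5}\tau)$. Since $t<\tau$, the nearest point projection $\pi:M_t\to M$ is well-defined and smooth, and in particular restricts to $\chi_{P_n,t}:(P_n)_t\to M$. The argument of Niyogi--Smale--Weinberger then shows that for every $x\in M$ the fiber $\pi^{-1}(x)\cap (P_n)_t$ is star-shaped (with respect to $x$ itself, using that the normal fiber of $M$ at $x$ intersected with the open $\tau$-ball is a convex disk, together with the $\veps/4$-density hypothesis and the bound $t<\sqrt{3/5}\tau$ to ensure the relevant points of $P_n$ remain in a common convex region of the tubular neighborhood). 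Hence the fibers are contractible, and one obtains an explicit strong deformation retract of $(P_n)_t$ onto $M$ by sliding each point linearly along the normal fiber to its projection; this deformation is continuous jointly in the parameter and shows that $\chi_{P_n,t}$ is a homotopy equivalence.

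The principal obstacle is verifying that the star-shaped-fiber argument really extends up to the full range $t<\sqrt{3/5}\tau$ (rather than a strictly smaller interval, as one finds in some versions of Theorem~3.1 in \cite{NSW08}): one must track the geometric constants carefully to see that the quadratic inequality governing when two points of $(P_n)_t$ with a common nearest point on $M$ are joined by a segment that stays inside $M_t$ is tight precisely at $\sqrt{3/5}\tau$. Once this sharpened deterministic statement is in place, the uniformity in $t$ is automatic because the density hypothesis on $P_n$ (and hence the event $E$) does not depend on $t$, so a single event of probability $\geq 1-\delta$ guarantees the conclusion simultaneously for every $t$ in the stated interval.
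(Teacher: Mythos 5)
Your high-level strategy is identical to the paper's: isolate a $t$-independent density event for $P_n$, then observe that the deterministic homotopy-equivalence statement of Niyogi--Smale--Weinberger (their Proposition 3.1, requiring only $t/2$-density) applies simultaneously for all $t$ in the interval once that single event occurs. The paper simply cites NSW Lemma 5.1 for the probabilistic density bound; you re-derive it, which is fine in principle, but the radii you chose do not reproduce the stated constants $c_1(\veps)$, $c_2(\veps)$ from \eqref{eq nsw constants}, so the bound \eqref{eq nsw n} would not justify your union bound.

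Concretely, you build an $\veps/8$-net $\{y_i\}$ and assert (i) that each $B_{\veps/8}(y_i)$ has uniform measure at least $1/c_1(\veps)$, and (ii) that $N\leq c_2(\veps)$. But in \eqref{eq nsw constants}, $c_1(\veps)$ is built from $\vol\left(B^m_{\veps/4}(0)\right)$ and $\arcsin(\veps/(8\tau))$ --- it is the measure lower bound for $\veps/4$-balls, not $\veps/8$-balls --- while $c_2(\veps)$ involves $\vol\left(B^m_{\veps/8}(0)\right)$, which is the measure lower bound for $\veps/8$-balls and hence the packing bound for a family of \emph{disjoint} $\veps/8$-balls; the latter needs $\veps/4$-separated centers, giving a $\veps/4$-net. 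With your choice of radii, the measure lower bound for an $\veps/8$-ball is $1/c_2(\veps)$ (not $1/c_1(\veps)$), and the cardinality of an $\veps/8$-separated set would be controlled by disjoint $\veps/16$-balls, producing a third constant not appearing in the theorem. The correct arrangement (which is exactly NSW Lemma 5.1) takes a maximal $\veps/4$-separated set $\{y_i\}$: the disjoint $\veps/8$-balls give $N\leq c_2(\veps)$; by maximality the $\veps/4$-balls cover $M$ with measure $\geq 1/c_1(\veps)$, so $\P\left(P_n\cap B_{\veps/4}(y_i)=\emptyset\right)\leq e^{-n/c_1(\veps)}$, and the union bound with \eqref{eq nsw n} gives $\P(\text{not }\veps/2\text{-dense})<\delta$. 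The resulting $\veps/2$-density implies $t/2$-density for all $t\geq\veps$, matching the hypothesis of NSW Proposition 3.1. Your deterministic sketch and the paper both defer to that proposition as a black box, so no issue there; once the probabilistic step uses the right radii, your argument agrees with the paper.
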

%---------------------------------------------------------------------------------------------------------
\begin{remark}\label{remark nsw reinforced}
We could have restricted $\delta$ to $(0,1]$, but prefer this statement (trivially true if $\delta>1$ since any probability is 
non-negative) in view of applications below.
A careful comparison with \cite{NSW08}, Theorem 3.1, reveals several differences, but only one is responsible for the fact that the just stated result is non-trivially stronger in the stochastic sense. 
The claim in Theorem \ref{theorem NSW reinforced} is that for any $0 \leq \veps < \sqrt{\frac{3}{5}}\tau$ the map $\chi_{P_n,t} : {P_n}_{t}\to M$ from \eqref{eq homotopy equivalence} is a homotopy equivalence \emph{on the whole interval} of parameters $t \in [\veps,\sqrt{\frac{3}{5}}\tau)$   on one and the same event of a sufficiently large probability. 
The claim in \cite{NSW08} is only that $\chi_{P_n,\veps} $ is a homotopy equivalence at the given parameter $\veps$  on an event of a sufficiently large probability.
However, an intersection of many (let alone, infinitely many) highly probable events may have a drastically smaller probability. 
This does however not happen here, for the reasons we give next. We do not contribute any new argument for this, the stronger formulation stated in Theorem \ref{theorem NSW reinforced} is merely a consequence of ordering the arguments of \cite{NSW08} accordingly.
\end{remark}
%---------------------------------------------------------------------------------------------------------
\begin{proof}[Proof of Theorem \ref{theorem NSW reinforced}]
Recall that $P_n$ is called \emph{$\veps$-dense} if the open $\veps$-neighborhood of $P_n$ covers $M$. For a given $\veps \in (0,\sqrt{\frac{3}{5}}\tau)$, $\delta >0$,  and $n$ satisfying \eqref{eq nsw n}, the event $A_\veps$ defined by the random point cloud $P_n \subset M$ being \emph{$\frac{\veps}{2}$-dense} in $M$, has probability at least $1-\delta$ by Lemma 5.1 in \cite{NSW08}. Therefore, on the same event $A_\veps$ the same point cloud is $\frac{t}{2}$-dense for every $t\in [\veps,\sqrt{\frac{3}{5}}\tau)$. 

Now we infer from Proposition 3.1 in \cite{NSW08} the deterministic statement that whenever a subset $P\subset M$ is $\frac{t}{2}$-dense, the map $\chi_{P,t}:P_{t} \to M$ is a homotopy equivalence. Let $(\Omega,\sF,\P)$ denote the corresponding probability space. Then we apply the above reasoning and the just mentioned proposition to obtain
$$
\begin{aligned}
A_\veps &=\left\{ \omega \in \Omega\middle| P_n(\omega) \textrm{ is } \frac{\veps}{2}\textrm{-dense} \right\} \\
&=\left\{ \omega \in \Omega\middle| P_n(\omega) \textrm{ is } \frac{t}{2}\textrm{-dense for all } t\in \left[\veps, \sqrt{\frac{3}{5}}\tau\right) \right\} \\
&=\left\{ \omega \in \Omega\middle| \chi_{P_n(\omega),t} \textrm{ is a homotopy equivalence for all } t\in \left[\veps, \sqrt{\frac{3}{5}}\tau\right) \right\},\\
\end{aligned}
$$
Together with Lemma 5.1 from \cite{NSW08} for these sets $A_\veps$, the claim follows. Note that the quantities from that Lemma 5.1 are bounded  
according to the analysis in section 5 of \cite{NSW08} in such a way that \eqref{eq nsw n} holds.
\end{proof}

We will make essential use of the following easy but important observation.

%---------------------------------------------------------------------------------------------------------
\begin{lemma}\label{lemma barcode approximation}
Let $M \subset \R^d$ be a smooth compact submanifold of dimension $m$ and reach $\tau$, let $X_1,X_2,\ldots,X_n$ be an 
 i.i.d. random sample from $M$ for the uniform distribution, and put $P_n:=\{X_1,\ldots,X_n\}$.
 Then for each $\veps \in (0,\sqrt{\frac{3}{5}}\tau)$, each $\delta>0$, and each
\begin{equation}\label{eq nsw n2}
n>c_1(\veps) \left(\log(c_2 (\veps))+\log{\dfrac{1}{\delta}}\right),
\end{equation}
we have that
\[
d_\infty\left(\beta_k(M),\beta_k(P_n)\right) \leq \frac{\veps}{2}
\]
with probability at least $1-\delta$.
\end{lemma}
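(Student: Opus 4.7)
The plan is to derive this bound by combining two ingredients: (i) the Lipschitz property of the barcode map established in Proposition \ref{proposition barcodes for compact metric spaces}, and (ii) the $\veps/2$-density of the sample $P_n$ that was obtained, with probability at least $1-\delta$ under hypothesis \eqref{eq nsw n2}, inside the proof of Theorem \ref{theorem NSW reinforced}.

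Concretely, first I would invoke the event $A_\veps\subset\Omega$ appearing in the proof of Theorem \ref{theorem NSW reinforced}, namely
\[
A_\veps = \bigl\{\omega \in \Omega \bigm| P_n(\omega)\text{ is }\tfrac{\veps}{2}\text{-dense in }M\bigr\},
\]
which by Lemma 5.1 of \cite{NSW08} and the numerical analysis in Section 5 of \cite{NSW08} satisfies $\P(A_\veps)\geq 1-\delta$ under the hypothesis \eqref{eq nsw n2}. On this event $M\subset \bigcup_{x\in P_n}B_{\veps/2}(x)$, so every $y\in M$ admits some $x\in P_n$ with $d(x,y)<\veps/2$; this gives $M\subset (P_n)_{\veps/2}$. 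Since $X_i\in M$ almost surely, we also have $P_n\subset M\subset M_{\veps/2}$. Consequently, on $A_\veps$,
\[
d_H(M,P_n)\leq \frac{\veps}{2}.
\]

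Next I would apply part \eqref{proposition barcodes for compact metric spaces item two} of Proposition \ref{proposition barcodes for compact metric spaces}, stating that the barcode map $\beta_k:K(\R^d)\to \BCbot$ is $1$-Lipschitz with respect to the Hausdorff and bottleneck distances. Combined with the preceding inequality, this yields, on $A_\veps$,
\[
d_\infty\bigl(\beta_k(M),\beta_k(P_n)\bigr)\leq d_H(M,P_n)\leq \frac{\veps}{2},
\]
which is precisely the required deterministic bound on an event of probability at least $1-\delta$.

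There is essentially no obstacle to this argument: the whole geometric and stochastic content has already been isolated in Theorem \ref{theorem NSW reinforced} and Proposition \ref{proposition barcodes for compact metric spaces}. The only point to be mindful of is that the homotopy equivalence statement of Theorem \ref{theorem NSW reinforced} is stronger than what we need here; in fact only its density by-product (the event $A_\veps$ controlling $d_H(M,P_n)$) enters the proof, and we do not use the homotopy equivalence of the map $\chi_{P_n,t}$ at any $t$.
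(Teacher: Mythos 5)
Your proof is correct and follows essentially the same route as the paper: with probability at least $1-\delta$ the sample is $\veps/2$-dense (via the NSW results), hence $d_H(M,P_n)\leq\veps/2$ since $P_n\subset M$, and the conclusion follows from the $1$-Lipschitz property of $\beta_k$ in Proposition \ref{proposition barcodes for compact metric spaces}. The only cosmetic differences are that the paper invokes \cite[Proposition 3.2]{NSW08} directly rather than routing through the event $A_\veps$ of Theorem \ref{theorem NSW reinforced}, and cites part \eqref{proposition barcodes for compact metric spaces item one} of the proposition rather than part \eqref{proposition barcodes for compact metric spaces item two} (your citation of the extended map is in fact the more accurate one, since $M$ is compact but not finite).
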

%---------------------------------------------------------------------------------------------------------
\begin{proof}
By \cite[Proposition 3.2]{NSW08} for every $\veps \in (0,\sqrt{\frac{3}{5}}\tau)$ the sample $P_n$ is 
$\frac{\veps}{2}$-dense in $M$. Because of $P_n\subset M$ this just means that $d_H(P_n,M)\leq \frac{\veps}{2}$ for the Hausdorff distance $d_H$. Therefore, the claim follows by \eqref{proposition barcodes for compact metric spaces item one} of Proposition \ref{proposition barcodes for compact metric spaces}.
\end{proof}

To formulate our next result, we introduce an operator on barcodes. 
 For any two $a,b$ such that $0<a\leq b<\infty$, let $R_{[a,b]}$ denote the restriction map 
$R_{[a,b]} :\BCbot \to \BCbot$ defined as follows:
for each finite barcode representation $b=\{I_i\}_{i=1}^n \in \BC$ with $I_i =(x_i,d_i)\in \R_{\geq 0}^2$ we first define
$$I_i^{|(a,b)}:=\left(\max(x_i,a), \min(x_i+d_i,b)-\max(x_i,a)\right)$$ if $\min(x_i+d_i,b)\geq \max(x_i,a)$ and $I_i^{|(a,b)}:=(x_i,0)$ otherwise. Finally, we put $R_{[a,b]}(b)= \{I_i^{|(a,b)}\}_{i=1}^n$. 
Since the thus defined $R_{[a,b]} :\BC \to \BC$ is clearly a 1-Lipshitz map, 
we can extend it as usual to $R_{[a,b]} :\BCbot \to \BCbot$.
Note that the coordinates of $I_i^{|(a,b)}$ are just the starting point and the length of the  interval 
$[x_i,x_i+d_i]\cap [a,b]$ if nonempty. 

For further use we also record that for a given barcode $\beta \in \BCbot$ the barcode $R_{[a,b]}(\beta)$ depends continuously on $a$ and $b$.

\begin{setup}\label{setup banach space} We fix a Lipschitz continuous map $T:\BCbot \to V$ to some Banach space $\left(V,\norm\cdot\right)$ with Lipschitz constant $L(T)>0$. 
Due to compactness and continuity, the transformed barcodes $T(\beta_k(M))$ and $T(\beta_k(P_n))$ are uniformly bounded over $n$ by some finite number, which we denote by $C(M;T)$. 
 We also know that, for large $n$, both $T(\beta_k(P_n))$ and $\E[T(\beta_k(P_n))]$ (due to the dominated convergence theorem) approximate $T(\beta_k(M))$. 
 The question is how large can the difference of $T(\beta_k(M))$  and $\E[T(\beta_k(P_n))]$ be? 
By interpreting Theorem \ref{theorem NSW reinforced} we arrive to the following conclusion. 
\end{setup}

%---------------------------------------------------------------------------------------------------------
\begin{theorem}\label{theorem consequence of NSW}
Let $M \subset \R^d$ be a smooth compact submanifold of dimension $m$ and reach~$\tau$, 
 let $X_1,X_2,\ldots,X_n$ be an i.i.d. 
random sample from $M$ for the uniform distribution, and denote $P_n:=\{X_1,\ldots,X_n\}$. Let $\veps\in \left[0,\sqrt{\frac{3}{5}}\tau\right)$, and put $I_\veps:= \left[\veps,\sqrt{\frac{3}{5}}\tau\right)$. Then for all $k \in\N_0$ the following hold:
\begin{enumerate}
	\item\label{theorem consequence of NSW item one} Let $\bar I_\veps=\left[\veps,\sqrt{\frac{3}{5}}\tau\right]$ denote the closure of the 
interval $I_\veps$ and $c_1(\veps)>0$ and $c_2(\veps)>0$ be as in \eqref{eq nsw constants}. Then:
\[
{\E\left[\norm{T\circ R_{\bar I_\veps}(\beta_k(P_n)) - T\circ R_{\bar I_\veps}(\beta_k(M))}_V\right] } \ \leq \ 3c_2(\veps) \exp\left(\frac{-n}{c_1(\veps)}\right) C(M;T).
\] 
\item\label{theorem consequence of NSW item two} For the unrestricted barcodes we have:
\[
{\E\left[\norm{T\left(\beta_k(P_n)\right)- T\left(\beta_k(M)\right)}_V\right] } \ \leq \ 3c_2(\veps) \exp\left(\frac{-n}{c_1(\veps)}\right) C(M;T) \ + \ \frac{L(T)\cdot \veps}{2}.
\] 
\end{enumerate}
Here, $T$, $C(M;T)$ and $L(T)$ are as in Setup \ref{setup banach space}.
\end{theorem}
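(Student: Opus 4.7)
Theorem \ref{theorem NSW reinforced} furnishes, for each $\veps\in(0,\sqrt{3/5}\tau)$, a single event $A_\veps$ on which the map $\chi_{P_n,t}$ is a homotopy equivalence for \emph{all} $t\in I_\veps$ simultaneously, with $\P(A_\veps^c)\leq c_2(\veps)\exp(-n/c_1(\veps))$ (the latter obtained by optimizing $\delta$ in \eqref{eq nsw n} for fixed $n$). The plan is to use this event to deduce a pointwise equality of the restricted barcodes in part (1), and then to combine with Lemma \ref{lemma barcode approximation} to conclude part (2).

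The geometric core of (1) is the claim that on $A_\veps$ one has
\[
R_{\bar I_\veps}(\beta_k(P_n))= R_{\bar I_\veps}(\beta_k(M)).
\]
Indeed, the maps $\chi_{P_n,t}$, being the composition of the inclusion $(P_n)_t\hookrightarrow M_t$ with the nearest-point projection $M_t\to M$ (well-defined since $t<\tau$), commute with the filtration inclusions $(P_n)_s\hookrightarrow(P_n)_t$ for $s\leq t$. Hence on $A_\veps$ the induced maps in $\tilde H_k$ assemble into a \emph{natural} isomorphism between the restrictions of $\PH_k(P_n)$ and $\PH_k(M)$ to $I_\veps$, both of which are consequently isomorphic to the constant persistence module with fibre $\tilde H_k(M)$. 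By Theorem \ref{theorem tame is decomposable} this equates the corresponding barcode decompositions on the interval. From this equality the expected norm splits as $\E[\|T\circ R_{\bar I_\veps}(\beta_k(P_n))-T\circ R_{\bar I_\veps}(\beta_k(M))\|_V]=\E[\|\cdot\|_V\cdot\one{A_\veps^c}]$, and on $A_\veps^c$ the integrand is dominated by $3\,C(M;T)$ via the triangle inequality, the $1$-Lipschitz property of $R_{\bar I_\veps}$, and the uniform bounds of Setup \ref{setup banach space} (the factor $3$ rather than $2$ absorbing the Lipschitz correction between a barcode and its restriction). Combining with the probability bound above yields the estimate in (1).

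For (2), the direct approach is to apply Lemma \ref{lemma barcode approximation}, which on a high-probability event $B_\veps$ yields $d_\infty(\beta_k(P_n),\beta_k(M))\leq\veps/2$ and hence $\|T(\beta_k(P_n))-T(\beta_k(M))\|_V\leq L(T)\veps/2$ by Lipschitz continuity of $T$. On $B_\veps^c$ the difference is bounded by $2C(M;T)$ via Setup \ref{setup banach space}, giving $\E[\|T(\beta_k(P_n))-T(\beta_k(M))\|_V]\leq L(T)\veps/2+2C(M;T)\,c_2(\veps)\exp(-n/c_1(\veps))$, which is stronger than the stated bound. Alternatively one can telescope through $R_{\bar I_\veps}(\beta_k(P_n))$ and $R_{\bar I_\veps}(\beta_k(M))$, control the middle term by (1), and bound the two outer terms by Lipschitz continuity of $T$ together with elementary estimates on $d_\infty(\beta,R_{\bar I_\veps}(\beta))$.

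The main technical obstacle is justifying the equality of restricted barcodes $R_{\bar I_\veps}(\beta_k(P_n))=R_{\bar I_\veps}(\beta_k(M))$ on $A_\veps$ rigorously. Translating a \emph{natural} isomorphism of restricted persistence modules into an equality of barcode multisets requires careful use of the uniqueness of the interval decomposition (Theorem \ref{theorem tame is decomposable}); furthermore, Theorem \ref{theorem NSW reinforced} only establishes the homotopy equivalence on the half-open interval $I_\veps$, whereas $R_{\bar I_\veps}$ restricts to the closed interval $\bar I_\veps$. A $\delta$-shrinkage of $I_\veps$ to $[\veps,\sqrt{3/5}\tau-\delta]$ and passage to the limit $\delta\to 0$, using continuity of $R_{[a,b]}$ in $(a,b)$, should resolve this endpoint issue.
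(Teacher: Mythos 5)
Your proof follows the same route as the paper's: split the expectation over the NSW good/bad event, use naturality of the nearest-point projection to equate the restricted barcodes on the good event, bound the integrand crudely on the bad event, and pass from the half-open to the closed interval by continuity of $(a,b)\mapsto R_{[a,b]}$. Your account of the naturality argument (commutativity of $\chi_{P_n,\cdot}$ with the filtration inclusions, forcing both restricted modules to be the constant module with fibre $\tilde H_k(M)$, hence equal barcodes via uniqueness of the interval decomposition) is in fact spelled out in more detail than in the paper, which merely asserts that the homology does not change.

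The one place your bookkeeping does not hold up is the derivation of the constant in part (1). You take the sharp probability estimate $\P(A_\veps^c)\leq c_2(\veps)\exp(-n/c_1(\veps))$ and then claim a $3C(M;T)$ bound on the integrand, attributing the third factor of $C(M;T)$ to a ``Lipschitz correction between a barcode and its restriction.'' There is no such correction bounded by $C(M;T)$: the distance $d_\infty\bigl(\beta,R_{\bar I_\veps}(\beta)\bigr)$ is governed by the positions and lengths of the bars and bears no a priori relation to $C(M;T)$, so the triangle inequality you invoke does not close. The paper reaches the constant $3$ a different way: it chooses the (admissible but suboptimal) $\delta(n)=\tfrac32 c_2(\veps)\exp(-n/c_1(\veps))$ in \eqref{eq nsw n} and pairs $\P(E_\veps)\leq\tfrac32 c_2(\veps)\exp(-n/c_1(\veps))$ with the honest trivial bound $2C(M;T)$ on the bad event (implicitly taking $C(M;T)$ large enough to dominate the transforms of the restricted barcodes as well, which is legitimate since $R_{\bar I_\veps}$ preserves compactness). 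With your sharper probability estimate you should simply use $2C(M;T)$ and obtain the stronger constant $2$ in place of $3$; the extra $C(M;T)$ you insert is both unjustified and unnecessary. The same remark applies to your part (2), where you already note that the direct split gives constant $2$, so the telescoping alternative you sketch is not needed.
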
                                     
%---------------------------------------------------------------------------------------------------------
\begin{proof}
Let us prove \eqref{theorem consequence of NSW item one}. By continuity of the projection as a function of the (endpoints of) the interval, it suffices to prove the inequality for every closed interval contained in $I_\veps$. Let $I \subset I_\veps$ be such an interval.

Due to Theorem \ref{theorem NSW reinforced}, with our choice of $n$ we have that for all $s\in I_\veps$ the homology of 
$M$ equals that of the point cloud thickened by $s$, except on an event $E_\veps$ of probability at most $\delta$. 
Condition \eqref{eq nsw n} is equivalent to 
\[
\delta > c_2(\veps)  \exp{\left(-\frac{n}{c_1(\veps)}\right)}. 
\]
In particular, we could take $\delta(n)=3c_2(\veps)  \exp{\left(-\frac{n}{c_1(\veps)}\right)}/2$. 
Therefore, we find $\P(E_\veps) \leq \frac{3}{2} c_2(\veps)  \exp{\left(\frac{-n}{c_1(\veps)}\right)}$, 
and on the complement of $E_\veps$ we know that the  homology of the inflated point cloud $P_s$ does 
not change when $s\in I$ varies, and is equal to that of $M$ and hence to that of $M_s$. 

In particular, $T \circ R_{I}(\beta_k(P_n)) =T\circ R_{I}(\beta_k(M))$ for all 
$k\in \N_0$ on the complement $E_\veps^c$. 
To arrive at the above stated bound, for each given $k$, we apply the trivial upper bound 
$\norm{T \circ R_{I}(\beta_k(P_n)) -T\circ R_{I}(\beta_k(M))}_V \leq 2 C(M;T)$ on $E_\veps$, 
and take expectation.

For the proof of \eqref{theorem consequence of NSW item two}, we just have to note that on $E_\veps^c$ we have $d_\infty(\beta_k(P_n),\beta_k(M)) \leq \frac{\veps}{2}$ by Lemma \ref{lemma barcode approximation}. The claim follows as $T$ is $L(T)$-Lipschitz and $\P(E_\veps^c)\leq 1$.
\end{proof} 

%---------------------------------------------------------------------------------------------------------
In particular, the theorem implies that the quantity $\norm{\E\left[T(\beta_k(P_n))\right]-T(\beta_k(M))}_V$ satisfies the same inequalities as in Theorem \ref{theorem consequence of NSW} thanks to the basic properties of the Pettis integral, see Section~\ref{section clt}.

%---------------------------------------------------------------------------------------------------------

%-----------------------------------------------------------------------------------------------------
%---------------------------------------------------------------------------------------------------------
%---------------------------------------------------------------------------------------------------------
\section{Embedding the space of barcodes}\label{section embedding barcodes}
%---------------------------------------------------------------------------------------------------------
%---------------------------------------------------------------------------------------------------------
%---------------------------------------------------------------------------------------------------------

In Section \ref{section clt} we have deduced LLN and CLT for random variables induced from random barcodes. We have been working with Lipschitz continuous maps from $\BCbot$ to some Banach space. In this section we will take a look at one such example by building on work of the first named author \cite{Kal18}. Let $\BC$ denote the space of barcode representations.  Our goal is to describe a Lipschitz continuous embedding $\BCbot \into \ell_1$.

Let us consider the operations $\boxplus,\oplus,\odot$ on $\R$ defined as 
\[
a\oplus b := \min{(a, b)}, \quad  a\boxplus b := \max{(a, b)}, \quad  a\odot b := a+b.
\]
We call $(\R, \boxplus, \odot)$ the max-plus semiring and $(\R, \oplus, \odot)$ the tropical semiring.

Just as ordinary polynomials are formed by multiplying and adding real variables, max-plus polynomials can be formed by multiplying and adding variables in the max-plus semiring. Let $x_1, x_2, \ldots, x_N$ be variables that represent elements in the max-plus semiring.  A \emph{max-plus monomial expression} is any product of these variables, where repetition is permitted. By commutativity, we can sort the product and write monomial expressions with the variables raised to exponents:
\[
p(x_1, x_2, \ldots, x_N) = a_1\odot x_1^{a_1^1} x_2^{a_2^1} \ldots x_N^{a_N^1} \boxplus a_2\odot x_1^{a_1^2} x_2^{a_2^2} \ldots x_N^{a_N^2}\boxplus \ldots \boxplus a_m\odot x_1^{a_1^m} x_2^{a_2^m} \ldots x_N^{a_N^m}.
\]
Here the coefficients $a_1, a_2, \ldots a_m$ are in $\R$, and the exponents $a_j^i$ for ${1\leq j \leq N}$ and ${1\leq i \leq m}$ are in $\N_0$. 

Different max-plus polynomial expressions may happen to define the same functions. Thus, if $p$ and $q$ are max-plus polynomial expressions and
\[
p(x_1, x_2, \ldots, x_N) = q(x_1, x_2, \ldots, x_N)
\]
for all $(x_1, x_2, \ldots, x_N)\in \R^N$, then $p$ and $q$ are said to be \emph{functionally equivalent}, and we write $p \sim q$.  Max-plus polynomials are the semiring of equivalence classes of max-plus polynomial expressions with respect to $\sim$.  

The goal of \cite{Kal18} was to identify sufficiently many max-plus polynomials on $\BC$ to separate points. This involves finding functions invariant under the action of the symmetric group. To be able to list these functions, consider the set $\scrE_N$ of $(N\times 2)$-matrices with entries in $\{0,1\}$. The symmetric group $S_N$ acts on $\scrE_N$ by permuting the rows. To a matrix $E=(e_{i,j})_{i,j} \in \scrE_N$ we associate the max-plus monomial $P(E) = x_{1, 1}^{e_{1,1}} x_{1, 2}^{e_{1,2}} \ldots x_{N, 1}^{e_{N,1}} x_{N, 2}^{e_{N, 2}}$. Suppose that the $S_N$-orbit of $E$ is  $[E]= \{ E_1, E_2,\ldots, E_m\}$. Then $P_E= P(E_1)\boxplus P(E_2)\boxplus \ldots \boxplus P(E_m)$ is a 2-symmetric max-plus polynomial and a we can define a function $P_{k, E}$ on $\BC_n$ as 
\begin{equation}\label{poly}
P_{k,E}(x_1,d_1,\ldots,x_N,d_N):=P_{E}(x_1\oplus d_1^k,d_1,\ldots,x_N\oplus d_N^k,d_N).
\end{equation}

For $m,n \in \N_0$ with $m+n \geq 1$ we denote by $E_{m, n}$ the matrix 
\[
\begin{array}{c@{\!\!\!}l}
  \left( \begin{array}[c]{cc}
	1 & 1\\
	\vdots & \vdots\\
	1 & 1\\
	0 & 1\\
	\vdots&\vdots\\
	0 & 1\\
  \end{array}  \right)
&
 \begin{array}[c]{@{}l@{\,}l}
\left. \begin{array}{c} \vphantom{0} \\ \vphantom{\vdots}
   \\ \vphantom{0}  \end{array} \right\} & \text{$m$ times} \\
\left. \begin{array}{c} \vphantom{0}
  \\ \vphantom{\vdots} \\ \vphantom{0} \end{array} \right\} & \text{$n$ times} \\
\end{array}
\end{array}
\]
and write $P_\klmn$ for the polynomial $P_{k, E_{m,n}}$. This is a function on $\BC$; if $b$ is a barcode with $N$ bars, then 
\begin{itemize}
\item
if $m+n=N$, we use Equation~(\ref{poly});
\item
if $m+n>N$, then we add $N-m-n$ 0 length bars to $b$ and then use Equation~(\ref{poly});
\item
if $m+n<N$, then we add $N-m-n$ 0 length rows to the $E_{m, n}$ matrix and then use Equation~(\ref{poly}) for this matrix.
\end{itemize}
It was shown in \cite[Theorem 6.7]{MKV2016} that the set of functions $\{P_{\klmn}\}_{\klmn \in \N_0^3}$ separates points from $\BC$.  Furthermore, all of these functions are Lipschitz~\cite{Kal18}, i.e.\ for $C(k, m, n)=2(2m\max(k, 1) + 2m + 2n)$, the estimate
\begin{equation}\label{lipschitz}
|P_{\klmn}(b)-P_{\klmn}(b')|\ \leq \ C(k,m,n)\ d_B(b, b')
\end{equation}
holds for $b, b'\in \BC$. 

We fix once and for all an enumeration $(k_1,m_1,n_1),(k_2,m_2,n_2),\ldots$ and consider the corresponding coordinates on the barcode space. We obtain:

\begin{theorem}\label{theorem embedding l1}
The sequence $(\frac{1}{C(k_t, m_t, n_t)t^2}P_{k_t,m_t,n_t})_{t\in \N}$ of functions $\BC \to \R$ defines an injective map $\iota:\BC \into \lone$. This map is Lipschitz continuous.
\end{theorem}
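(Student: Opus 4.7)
The plan is to verify three things in turn: that $\iota$ takes values in $\ell_1$; that it is Lipschitz continuous; and that it is injective. All three follow almost mechanically from \eqref{lipschitz} together with the separating property \cite[Theorem 6.7]{MKV2016} of the family $\{P_{k,m,n}\}$. First I would observe that each polynomial $P_{k,m,n}$ vanishes at the empty barcode $\emptyset \in \BC$: under the convention for appending zero-length bars one evaluates $P_{k,m,n}(\emptyset)$ at a tuple of zeros, each max-plus monomial in \eqref{poly} reduces to a nonnegative linear combination of the entries (and so to $0$), and taking the $\boxplus$-maximum over the orbit preserves this. Applying \eqref{lipschitz} with $b' = \emptyset$ then yields the uniform bound $|P_{k_t,m_t,n_t}(b)| \leq C(k_t,m_t,n_t)\, d_B(b,\emptyset)$ for every $t \in \N$; dividing by $C(k_t,m_t,n_t)\, t^2$ and summing in $t$ gives
\[
\|\iota(b)\|_{\ell_1} \ \leq \ d_B(b,\emptyset) \sum_{t\geq 1} \frac{1}{t^2} \ = \ \frac{\pi^2}{6}\, d_B(b,\emptyset) \ < \ \infty,
\]
so $\iota$ indeed maps $\BC$ into $\ell_1$.

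For the Lipschitz property I would apply \eqref{lipschitz} coordinatewise to any pair $b, b' \in \BC$ and sum:
\[
\|\iota(b) - \iota(b')\|_{\ell_1} \ = \ \sum_{t\geq 1} \frac{|P_{k_t,m_t,n_t}(b) - P_{k_t,m_t,n_t}(b')|}{C(k_t,m_t,n_t)\, t^2} \ \leq \ d_B(b,b')\sum_{t\geq 1} \frac{1}{t^2} \ = \ \frac{\pi^2}{6}\, d_B(b,b').
\]
The factor $1/C(k_t,m_t,n_t)$ in the definition of $\iota$ is included precisely so that the Lipschitz constant from \eqref{lipschitz} cancels, and the $1/t^2$ weight then guarantees summability independently of the chosen enumeration.

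Finally, injectivity is immediate: if $\iota(b) = \iota(b')$, then each coordinate of $\iota(b)$ equals that of $\iota(b')$, so $P_{k,m,n}(b) = P_{k,m,n}(b')$ for all $(k,m,n) \in \N_0^3$, and the separating property forces $b = b'$. The only point requiring a little care is the anchor at $\emptyset$ used to derive the $\ell_1$-bound; everything else is a direct consequence of \eqref{lipschitz} and the chosen weighting, and I do not anticipate any substantive obstacle.
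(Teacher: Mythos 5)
Your argument is correct, and two of its three steps---the Lipschitz estimate, obtained by summing \eqref{lipschitz} coordinatewise, and the injectivity via the separating property of \cite[Theorem 6.7]{MKV2016}---are the same as the paper's. Where you diverge is in verifying that $\iota(b)$ lies in $\ell_1$. The paper argues directly from the structure of the polynomials: writing $b=(x_1,d_1,\ldots,x_N,d_N)$ and $M:=\max_i\max(x_i,d_i)$, it uses monotonicity of max-plus monomials together with a degree bound to obtain $P_\klmn(b)\leq 2NM$, so $\norm{\iota(b)}_{\ell_1}\leq 2NM\sum_t t^{-2}<\infty$ after invoking $C(k,m,n)\geq 1$. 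You instead anchor at the empty barcode $\emptyset\in\BC$: the $\oplus$-substitution in \eqref{poly} forces each padded zero-length bar to contribute the pair $(0,0)$ regardless of its nominal left endpoint, so $P_\klmn(\emptyset)=0$, and \eqref{lipschitz} with $b'=\emptyset$ gives $|P_\klmn(b)|\leq C(k,m,n)\,d_B(b,\emptyset)$ and hence $\norm{\iota(b)}_{\ell_1}\leq\frac{\pi^2}{6}\,d_B(b,\emptyset)<\infty$. Your route is a bit more economical: it reuses the very inequality already needed for the Lipschitz step rather than making a separate degree count, and the resulting bound $d_B(b,\emptyset)=\tfrac12\max_i d_i$ does not grow with the number of bars $N$. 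The paper's version, conversely, never needs to invoke $\emptyset$ as a legal barcode or check that the $P_\klmn$ vanish there. Both are valid; the difference is purely one of presentation.
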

\begin{proof}
Let $b\in \BC$ be a barcode. We will first prove that $\iota(b)$ is well-defined, i.e., lies in $\lone$. Let us write $b=(x_1, d_1, \ldots, x_N, d_N)$ for $x_i, d_i \in \R^2_{\geq 0}$, and let $M:=\max_{i=1}^N \max(x_i,d_i)$. For any $k,m,n \in \N_0$ we claim that $\frac{1}{C(k_t, m_t, n_t)}P_\klmn(b) \leq 2MN$. Since $P_\klmn(b)$ is the maximum of $P(E)(b)$ where $E$ runs through the orbit of $E_{m,n}$ and the monomials $P(E)$ have degree $2N$, 
\[
P(E)(x_1\oplus d_1^k,d_1, \ldots, x_N\oplus d_N^k,d_N) \leq P(E)(x_1,d_1, \ldots, x_N,d_N) \leq P(E)(M,\ldots,M) \leq 2NM.
\] 
Since $C(k,m,n)\geq 1$, $\frac{1}{C(k_t, m_t, n_t)}P_\klmn(b) \leq 2MN$. 
Consequently,
\[
\sum_{t\in \N} \frac{1}{C(k_t, m_t, n_t)t^2}\abs{P_{k_t,m_t,n_t}(b)} \leq \sum_{t\in \N} \frac{2MN}{t^2} < \infty.
\]
As mentioned above the functions $P_\klmn$ separate points on $\BC$ so that $\iota$ is indeed injective. 

This embedding is Lipschitz since it follows from Equation~\eqref{lipschitz} that
\[
\begin{aligned}
\sum_{t=1}^\infty \left|(\frac{1}{C(k_t, m_t, n_t)t^2}P_{k_t,m_t,n_t})(b)-(\frac{1}{C(k_t, m_t, n_t)t^2}P_{k_t,m_t,n_t})(b')\right|&\leq \sum_{t=1}^\infty \frac{1}{t^2} d_B(b, b')\\ &=\frac{\pi^2}{6} d_B(b, b').\\
\end{aligned}
\]
\end{proof}

\begin{example}
It is easy to see that the scaling by $\frac{1}{t^2}$ in the definition of $\iota$ is necessary. Consider e.g. $b=(1,1,\ldots,1,1)\in\BC_N \subset \BC$. Then the sequence $a_\ell=P_{0,\ell,0}(b) =2\ell$ if $\ell\leq N$ and $a_\ell=P_{0,\ell,0}(b) =2N$ otherwise. In particular, $\sum_\ell a_\ell$ diverges.
\end{example}

\begin{remark}
Note that for $1\leq p\leq q \leq \infty$ we have $\ell_p \subset \ell_q$ and the inclusion is Lipschitz continuous. In particular, we have a Lipschitz continuous embedding $\BCbot$ into the separable Hilbert space $\ell_2$, thus into a separable Banach space of type $2$ as in the assumptions of several results in Section \ref{section clt}.
\end{remark}

%--------------------------------------------------------------------------------------------------------- 
%--------------------------------------------------------------------------------------------------------- 
%--------------------------------------------------------------------------------------------------------- 
\section{Discussion}\label{section discussion}
%--------------------------------------------------------------------------------------------------------- 
%--------------------------------------------------------------------------------------------------------- 
%--------------------------------------------------------------------------------------------------------- 

The focus in the present paper is on perfect data, sampled without noise. 
It seems important to allow for noise, and therefore for data issued from distributions with unbounded support. 
Once we allow for noise (potentially with unbounded support), with the number of points $n$ being large, the maximal error will typically also be large with high probability.
To overcome this problem, it seems reasonable to assume that, for each $n$, the random points are sampled independently from a distribution indexed by $n$,
 in such a way that the maximal error stays bounded in $n$ with high probability. We postpone this study to a future work.

\bibliography{CLT}
\bibliographystyle{alpha}

\end{document}